\documentclass[12pt]{amsart} 

\usepackage[psamsfonts]{amssymb} 
\usepackage{amsfonts,amsmath}
\usepackage{color}

\usepackage{epsfig,multicol} 

\usepackage{xypic}

\input xy
\xyoption{all}

\newtheorem{thmA}{Theorem}

\newtheorem{corA}[thmA]{Corollary}

\newtheorem{theorem}{Theorem}[section]
\newtheorem{prop}[theorem]{Proposition}
\newtheorem{proposition}[theorem]{Proposition}
\newtheorem{lemma}[theorem]{Lemma}
\newtheorem{corollary}[theorem] {Corollary}
\newtheorem{addendum}[theorem] {Addendum}
\newtheorem{question}[theorem]{Question}

\theoremstyle{remark}
\newtheorem{remark}[theorem]{Remark}
\newtheorem{example}[theorem]{Example}

\theoremstyle{definition}
\newtheorem{definition}[theorem]{Definition}

\newcommand\redden[1]{\color{black}#1\color{black}}

\newcommand\mb[1]{\color{black}#1\color{black}}

\def\F{\mathcal F}
\def\Z{\mathbb Z}
\def\N{\mathbb N}
\def\Q{\mathbb Q}
\def\G{\Gamma}

\def\e{\varepsilon}
\def\SL{{\rm{SL}}}

\def\H{\Lambda}

\def\<{\langle}
\def\>{\rangle}

\title[Presenting finitely presentable groups]{On the difficulty of presenting finitely presentable groups}

\author[Martin R.\ Bridson]{Martin R.\ Bridson}
\address{Mathematical Institute, 24-29 St Giles', Oxford OX1 3LB, UK}
\email{bridson@maths.ox.ac.uk}

\author[Henry Wilton]{Henry Wilton}
\address{Mathematics 253-37, Caltech, Pasadena, CA 91125, USA}
\email{wilton@caltech.edu}

\thanks{Bridson is supported by a Senior
Fellowship from the EPSRC. Wilton is supported in part by NSF grant DMS-0906276.}

\date{minor edits 23/02/2011}

\subjclass[2000]{20F10, 20F65, 20F67}
\keywords{Finitely presentable groups, hyperbolic groups, linear groups, decision problems}

\begin{document}
\begin{abstract} 
We {\mb{exhibit}} classes of groups in which the word problem is uniformly solvable but in which there is no algorithm that can compute finite presentations for finitely presentable subgroups.  Direct products of hyperbolic groups, groups of integer matrices, and right-angled Coxeter groups form such classes. We discuss related classes of groups in which there {\em{does}} exist an algorithm to compute  finite presentations for finitely presentable subgroups.  We also construct a finitely presented group that has a polynomial Dehn function  but in which there is no algorithm to compute the first Betti number of its finitely presentable subgroups.
\end{abstract}
\maketitle

\centerline{\em{For Fritz Grunewald}}

\section{Introduction}

In the literature, the term ``finitely presented" is commonly used to describe a group that is isomorphic to a group of the form $F/N$ where $F$ is the free group on a finite set $A$ and $N$ is the subgroup generated by the conjugates of a finite set $R\subseteq F$; the casual notation $\G = \<A\mid R\>$ is often used in such circumstances. When greater precision is required, the more accurate term ``finitely presentable" is used to describe $\G$, and $\G$ is said to be {\em{finitely presented}} only when an explicit presentation $\G = \<A\mid R\>$ is given (i.e. a surjection $\pi:F\to\G$ and a choice of finite normal generating set
$R$ for the kernel of $\pi$). In this article we shall examine, from an algorithmic point of view, the content of the distinction between finite presentability and finite presentation in the context of subgroups of several well-behaved classes of groups, notably direct products of hyperbolic groups, groups of integer matrices, and right-angled Artin and Coxeter groups.

\smallskip

Consider the following statement: {\em{there exist finitely presented groups $G$ and integers $n$ such that there is no algorithm that, given a set of $n$ words in the generators of $G$  generating a finitely presentable subgroup $\H$, can calculate a presentation of $\H$; nor is there an algorithm that can calculate the 
dimension of $H_1(\H,\mathbb{Q})$.}}  After some reflection, the reader will realise that this is an immediate consequence of the fact that there exist finitely presented, torsion free groups with unsolvable word problem: take $G$ to be such a group and take $n=1$.
{\mb{More subtly, Collins \cite{collins}, building on work of McCool \cite{mccool}, established the existence of finitely
presented groups for which there is an algorithm to solve the word problem but no algorithm to determine the
order of an element. But what happens if we restrict our attention to classes of more geometrically significant groups
or classes that admit a uniform solution to the word problem, such as hyperbolic groups or residually finite groups?}}

\begin{definition}
One says that the {\em{finite presentation problem}} for a group $\G$ is {\em{solvable}} if there is an algorithm that, given  a finite subset $S\subseteq \G$ generating a finitely presentable subgroup $\Lambda$, outputs a finite presentation for $\Lambda$.

One says that the finite presentation problem for a class $\mathcal{C}$ of finitely presentable groups is \emph{uniformly solvable} if there is an algorithm that, given as input a  finite presentation for a group $\G\in\mathcal{C}$ and a finite subset $S\subseteq \G$ generating a finitely presentable subgroup $\Lambda$, will output a finite presentation for $\Lambda$.
\end{definition}

{\mb{It is important to note that this definition concerns the algorithmic construction of finite presentations
and not their mere existence; it should be contrasted with Definition \ref{d:def2}.}}
{\mb{It is also important to note that the}} algorithm is not required to give a correct answer or even to terminate if the input subset $S$ does not generate a finitely presentable subgroup.

Classes of groups that admit a uniform solution to the finite presentation problem include abelian groups, free groups, coherent right-angled Artin groups \cite{kapovich_foldings_2005}, locally quasiconvex hyperbolic groups, hyperbolic 3-manifold groups, certain Coxeter groups \cite{schupp_coxeter_2003}, and finitely presented residually free groups \cite{bridson_finitely_2008}.  In Section \ref{s: Examples} we shall discuss such positive results.  Our main results, though, concern  classes of groups, not far removed from those listed above, in which the uniform finite presentation problem is {\em{unsolvable}}, even though there is a uniform solution to the word problem.

For a fixed finite alphabet $A$, we shall denote by $A^{\pm}$ the set of letters $a\in A$ together with formal inverses $a^{-1}$, and by $A^{\pm*}$ the set of all finite words in the letters $A^{\pm}$.

\begin{thmA}\label{t:main}
There exists a finite set $Z$ and recursive sequences $(\Sigma_n),$ $ (S_n)$ of finite sets of words in $Z^{\pm *}$, of a fixed cardinality, such that:
\begin{enumerate}
\item for all $n\in\N$ the group $G_n:=\< Z\mid \Sigma_n\>$ is of the form
\[G_n
\cong\Gamma_n\times\Gamma_n
\]
where $\Gamma_n$ is a residually finite word-hyperbolic group of cohomological dimension 2;
\item for all $n\in\N$ the subgroup $\H_n\subseteq G_n$ generated by (the image of) $S_n$ is finitely presentable;
\item the set 
\[
\{n\mid b_1(\H_n)=b_1(G_n)\}
\]
is recursively enumerable but not recursive (where $b_1(\Lambda_n)$ denotes the first Betti number,  ${\rm{dim}}_\Q \,H_1(\Lambda_n,\Q)$).
\end{enumerate}
In particular, there does not exist an algorithm that, given $\< Z\mid \Sigma_n\> $ and $S_n$, can compute a 
finite presentation for $\H_n=\<S_n\>$.
\end{thmA}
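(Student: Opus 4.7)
The plan is to encode the word problem of a suitable base group $Q$ into the first Betti number of a family of finitely presentable subgroups of $\Gamma\times\Gamma$, for a fixed residually finite word-hyperbolic group $\Gamma$, so that algorithmically extracting a presentation of $\H_n$ from the data $(\Sigma_n, S_n)$ would solve that word problem.

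First I would fix a finitely presented group $Q$ of type $\mathrm{F}_3$ whose word problem is recursively enumerable but not recursive (obtained from Boone--Novikov groups by finiteness-raising techniques), and enumerate words $(w_n)_{n\in\N}$ in the generators of $Q$ so that $\{n:w_n=_Q 1\}$ is r.e.\ but not recursive. Next, apply the Haglund--Wise version of Rips's construction to $Q$ to produce an extension
\[
1\to K\to\Gamma\to Q\to 1,
\]
with $\Gamma$ virtually compact special (hence residually finite) word-hyperbolic of cohomological dimension $2$, and $K$ finitely generated. The fibre product $P=\Gamma\times_Q\Gamma\leq\Gamma\times\Gamma$ is then finitely presentable by the asymmetric 1-2-3 theorem of Baumslag--Bridson--Miller--Short, since $Q$ is of type $\mathrm{F}_3$; it is generated by the diagonal of $\Gamma$ together with $K\times\{1\}$.

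For each $n$, I would let $\tilde w_n\in\Gamma$ be the natural lift of $w_n$ and set $\H_n:=\langle P\cup\{(1,\tilde w_n)\}\rangle\leq\Gamma\times\Gamma$. When $w_n=_Q 1$ we have $\tilde w_n\in K$, so $(1,\tilde w_n)\in P$ and $\H_n=P$; otherwise $\H_n$ strictly contains $P$ and corresponds to the fibre product of $\Gamma$ over the quotient $Q\twoheadrightarrow Q/\langle\langle w_n\rangle\rangle_Q$. Take $\Sigma_n$ to be a fixed finite presentation of $\Gamma\times\Gamma$ over an alphabet $Z$, and take $S_n$ to be the union of a fixed finite generating set of $P$ with a word representing $(1,\tilde w_n)$; both sequences are then recursive with cardinality independent of $n$, and each $G_n$ is presented as $\Gamma\times\Gamma$ as required.

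The principal technical obstacle is showing that $\H_n$ is finitely presentable for \emph{every} $n$, not only when $w_n=_Q 1$; this will require either an enhanced 1-2-3-type result handling the family of larger fibre products uniformly, or a refinement of the construction so that $S_n$ always generates a canonical, manifestly finitely presentable subgroup (for instance, replacing $(1,\tilde w_n)$ with a commutator construction forcing the extra element into a controlled finitely generated normal subgroup). Once that is in hand, one computes $b_1(\H_n)$ via its image in $Q\times Q$ and verifies that $b_1(\H_n)=b_1(G_n)=2b_1(\Gamma)$ if and only if $w_n=_Q 1$. The set $\{n:b_1(\H_n)=b_1(G_n)\}$ is then r.e.\ by enumerating derivations witnessing $w_n=_Q 1$, and is non-recursive by choice of $Q$. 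Any algorithm computing a finite presentation of $\H_n$ from $(\Sigma_n,S_n)$ would yield an algorithm computing $b_1(\H_n)$ and so solve the word problem of $Q$, a contradiction.
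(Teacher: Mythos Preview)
Your approach differs substantially from the paper's, and in its present form it has two genuine gaps, one of which you already flag.

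\textbf{Finite presentability.} You acknowledge that you do not know how to prove $\Lambda_n$ is finitely presentable when $w_n\neq_Q 1$. This is a real obstruction, not a technicality. In that case $\Lambda_n$ is the fibre product of $\Gamma$ over $Q':=Q/\langle\!\langle w_n\rangle\!\rangle$, and there is no reason for $Q'$ to be of type $\mathrm{F}_3$; indeed, killing a single element in an $\mathrm{F}_3$ group can destroy finite presentability of the quotient, let alone $\mathrm{F}_3$. Your suggested fixes (an ``enhanced 1-2-3 theorem'' or a commutator trick) are not known to work, and the paper does not attempt anything like this.

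\textbf{The Betti-number dichotomy.} Even granting finite presentability, your claim that $b_1(\Lambda_n)=2b_1(\Gamma)$ if and only if $w_n=_Q 1$ is unjustified on both sides. When $w_n=1$ you have $\Lambda_n=P$, and $b_1(P)$ equals $2b_1(\Gamma)+\dim H_0(Q,H_1(K;\Q))-b_1(\Gamma)$ via the semidirect-product description; there is no reason this should collapse to $2b_1(\Gamma)$ for an arbitrary $\mathrm{F}_3$ group $Q$ with unsolvable word problem. Conversely, when $w_n\neq 1$ you give no argument that $b_1(\Lambda_n)\neq 2b_1(\Gamma)$.

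The paper sidesteps both problems by encoding a different undecidable question. It uses the Collins--Miller sequence of presentations $Q_n=\langle X\mid R_n\rangle$ (fixed alphabet, fixed number of relators) for which the \emph{triviality} problem is undecidable and, crucially, $b_2(Q_n;\Q)$ is zero or infinite according as $Q_n$ is trivial or not. One then passes to the universal central extension $\widetilde{Q_n}$ (forcing $H_2(\Gamma_n)\to H_2(Q_n)$ to vanish), applies Wise's residually finite Rips construction to $\widetilde{Q_n}$, and takes the fibre product $\Lambda_n$ over $Q_n$. The five-term exact sequence then gives the clean formula $b_1(\Lambda_n)=2b_1(\Gamma_n)+b_2(Q_n)$, and since every $Q_n$ is either trivial or aspherical it is always of type $\mathrm{F}_3$, so the 1-2-3 Theorem applies uniformly. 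The price is that $\Gamma_n$ varies with $n$, but the fixed alphabet and relator count in Collins--Miller keep $Z$, $|\Sigma_n|$ and $|S_n|$ constant.
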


\emph{Special} groups were defined by Haglund and Wise in \cite{haglund_special_2008}, and a group is \emph{virtually} special if it has a subgroup of finite index that is special.  The reader is referred to Section \ref{s: Matrix groups} for further details.  {\mb{Our next theorem improves the groups $\Gamma_n$ of Theorem \ref{t:main} to virtually special groups, but we lose the uniform bound on the size of the generating sets and the number of relations.}}

\begin{thmA}\label{t:main special}
There exists a recursive sequence of triples of finite sets $(Z_n,\Sigma_n,S_n)$  with $\Sigma_n,S_n\subseteq Z_n^{\pm *}$ such that:
\begin{enumerate}
\item for all $n\in\N$ the group $G_n:=\< Z_n\mid \Sigma_n\>$ is of the form
\[G_n
\cong\Gamma_n\times\Gamma_n
\]
where $\Gamma_n$ is word-hyperbolic, ${\rm{CAT}}(0)$\footnote{i.e. it acts properly and cocompactly on a complete
1-connected metric space that is non-positively curved in the sense of A.D.~Alexandrov \cite{bridson_metric_1999}.} and virtually special;
\item for all $n\in\N$ the subgroup $\H_n\subseteq G_n$ generated by (the image of) $S_n$ is finitely presentable;
\item the set 
\[
\{n\mid b_1(\H_n)=b_1(G_n)\}
\]
is recursively enumerable but not recursive.
\end{enumerate}
In particular, there does not exist an algorithm that, given $\< Z_n\mid \Sigma_n\> $ and $S_n$, can compute a 
finite presentation for $\H_n=\<S_n\>$.
\end{thmA}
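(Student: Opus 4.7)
The plan is to follow the now-standard Rips--fibre-product strategy, in the spirit of Baumslag--Bridson--Miller--Short, but with the Rips construction replaced by a version whose output is virtually special, $\mathrm{CAT}(0)$ and hyperbolic. Concretely, I would fix a recursively enumerable but non-recursive set $T\subseteq\N$ and a recursive sequence of finite presentations $Q_n=\langle A_n\mid R_n\rangle$ of groups with the property that $Q_n$ is trivial if and only if $n\in T$ (or, more flexibly, such that some fixed, computable element $w_n\in F(A_n)$ represents the trivial element of $Q_n$ exactly when $n\in T$). To each $Q_n$ I would then apply a ``virtually special'' refinement of the Rips construction to obtain, uniformly and computably, a short exact sequence $1\to N_n\to\Gamma_n\to Q_n\to 1$ in which $\Gamma_n$ is word-hyperbolic, $\mathrm{CAT}(0)$ and virtually special, and in which $N_n$ is two-generated and contained in the commutator subgroup of $\Gamma_n$.

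From this data I form the asymmetric fibre product $\H_n=\{(g_1,g_2)\in\Gamma_n\times\Gamma_n\mid \pi_n(g_1)=\pi_n(g_2)\}$, which is normally generated in $G_n=\Gamma_n\times\Gamma_n$ by the diagonal together with $N_n\times 1$. To obtain a generating set $S_n$ and a presentation $\langle Z_n\mid\Sigma_n\rangle$ for $G_n$, I would take $Z_n$ to be two disjoint copies of a generating set for $\Gamma_n$ and $\Sigma_n$ to be the obvious set of relators (the Rips relators in each factor, together with commutation relators between the two factors). To ensure that $\H_n$ is finitely presentable I invoke the $1$-$2$-$3$ theorem of Baumslag--Bridson--Miller--Short: the fibre product of a short exact sequence whose quotient is of type $F_3$ is finitely presented. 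To guarantee $F_3$-ness uniformly, I would pre-process the sequence $Q_n$ by replacing it with $Q_n\times F$ for a fixed group $F$ of type $F_\infty$ (e.g.\ a free group, or a direct product whose cohomology is easily controlled), arranging matters so that the decision problem encoded in $T$ is preserved and the $F_3$ hypothesis is automatic.

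The final ingredient is to arrange that $b_1(\H_n)=b_1(G_n)$ is equivalent to membership of $n$ in a non-recursive set. Since $N_n\subseteq[\Gamma_n,\Gamma_n]$, the five-term exact sequence in homology applied to $1\to N_n\to\H_n\to\Gamma_n\to 1$ identifies the extra rank in $H_1(\H_n;\Q)$ beyond $b_1(\Gamma_n)$ with the $\Q$-rank of the coinvariants $(N_n^{\mathrm{ab}})_{Q_n}\otimes\Q$ modulo the image of $H_2(\Gamma_n;\Q)$. By a direct computation with the Rips presentation, this rank is determined by $H_1(Q_n;\Q)$ (together with computable contributions from $\Gamma_n$), so I can arrange that $b_1(\H_n)=b_1(G_n)=2b_1(\Gamma_n)$ holds precisely when $Q_n$ has a prescribed abelianisation --- for example, when $Q_n$ is trivial, or equivalently when $n\in T$. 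Since $T$ is recursively enumerable but not recursive, so is $\{n\mid b_1(\H_n)=b_1(G_n)\}$, and the failure of the finite presentation problem follows: from a finite presentation one can compute $b_1$ algorithmically via Smith normal form.

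The main obstacle is the first step, namely producing a Rips-style construction whose output is simultaneously hyperbolic, $\mathrm{CAT}(0)$ and virtually special, and doing so uniformly in the input presentation; once this is in hand, the rest of the argument is a bookkeeping exercise combining the $1$-$2$-$3$ theorem with an explicit computation of $b_1$. A secondary technical point is to keep the correspondence between the undecidable property of $Q_n$ and the Betti number equality as clean as possible, which is why I would prefer to frame the target undecidable problem as ``$Q_n$ is perfect'' (or ``$H_1(Q_n;\Q)=0$'') rather than ``$Q_n$ is trivial'': the former is visibly read off from $b_1(\H_n)$, and can still be made non-recursive by standard encoding tricks.
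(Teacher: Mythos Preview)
Your outline has the right architecture (Rips construction, fibre product, 1-2-3 Theorem, five-term sequence), and the virtually special Rips construction you flag as ``the main obstacle'' is in fact a known theorem of Haglund and Wise, so that step is fine. But two other steps contain genuine gaps.

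\textbf{Ensuring $Q_n$ is of type $F_3$.} Replacing $Q_n$ by $Q_n\times F$ with $F$ of type $F_\infty$ does \emph{not} make the product of type $F_3$: finiteness properties of a direct product are no better than those of its factors. You need the $Q_n$ themselves to be of type $F_3$, uniformly. The paper achieves this by using the Collins--Miller sequence, whose presentations are either trivial or aspherical; in either case $Q_n$ is of type $F_\infty$. Without a specific source of $F_3$ quotients with an undecidable triviality problem, the 1-2-3 Theorem cannot be invoked and finite presentability of $\Lambda_n$ is unproved.

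\textbf{Linking $b_1(\Lambda_n)$ to an undecidable invariant of $Q_n$.} Your proposed invariant, $H_1(Q_n;\Q)$ (equivalently, ``$Q_n$ is perfect''), is \emph{decidable} from a finite presentation via Smith normal form, so it cannot encode a non-recursive set. The right invariant is $b_2(Q_n)$, which is genuinely uncomputable (Gordon, Collins--Miller). The five-term sequence gives
\[
\dim_{\Q} H_0(Q,H_1(K;\Q)) \;=\; b_1(\Gamma) - b_1(Q) + b_2(Q) - \dim\bigl(\mathrm{im}\,H_2(\Gamma;\Q)\to H_2(Q;\Q)\bigr),
\]
so to get the clean formula $b_1(\Lambda)=2b_1(\Gamma)+b_2(Q)$ one needs both $b_1(Q)=0$ and the vanishing of $H_2(\Gamma)\to H_2(Q)$. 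The paper secures the second condition by inserting the universal central extension $\widetilde{Q}$ between $\Gamma$ and $Q$: since $H_2(\widetilde{Q})=0$, the composite $H_2(\Gamma)\to H_2(\widetilde{Q})\to H_2(Q)$ is zero. Your sketch omits this step, and without it the contribution of $H_2(\Gamma)$ (which \emph{is} computable but nonzero) contaminates the formula and breaks the equivalence between $b_1(\Lambda_n)=b_1(G_n)$ and any clean undecidable property of $Q_n$.
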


Special groups are (virtually) subgroups of right-angled Artin groups and right-angled Coxeter groups.  By combining this fact with Theorem \ref{t:main special}, one obtains many classes of groups in which the finite presentation problem is not uniformly solvable; we list some in the following corollary.

\begin{corA}\label{c: Examples}
The finite presentation problem is not uniformly solvable in any of the following classes of groups:
\begin{enumerate}
\item direct products of hyperbolic groups;
\item ${\rm{CAT}}(0)$ groups;
\item\label{item: Z-linear} $\Z$-linear groups (i.e. groups of integer matrices);
\item right-angled Coxeter groups;
\item right-angled Artin groups.
\end{enumerate}
(Note that the word problem is uniformly solvable in each of the above classes of groups.)
\end{corA}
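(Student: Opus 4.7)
The plan is to deduce items (1)--(5) from Theorems \ref{t:main} and \ref{t:main special} by producing, for each class, a recursive sequence of (presentation, subset) pairs on which the presentation problem is unsolvable. Items (1) and (2) require no further work: the groups $G_n=\Gamma_n\times\Gamma_n$ of Theorem \ref{t:main} are already direct products of hyperbolic groups, and those of Theorem \ref{t:main special} are $\mathrm{CAT}(0)$ (since the class is closed under direct products), so the sequences $(Z,\Sigma_n,S_n)$ and $(Z_n,\Sigma_n,S_n)$ witness the failure of uniform solvability in the respective classes.

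For items (3)--(5), I would exploit the virtually special hypothesis in Theorem \ref{t:main special}. By Haglund--Wise, each $\Gamma_n$ contains a finite-index subgroup $K_n$ that embeds in a right-angled Artin group $A_n$, and one expects the explicit construction behind Theorem \ref{t:main special} to make $K_n$, $A_n$ and this embedding computable recursively from $n$. Then $K_n\times K_n$ is a finite-index subgroup of $G_n$ sitting inside the RAAG $B_n:=A_n\times A_n$. Let $S_n'$ be a finite generating set for $\langle S_n\rangle\cap(K_n\times K_n)$; this is a finite-index subgroup of $\langle S_n\rangle$, hence finitely presentable. The pairs $(B_n,S_n')$ form a recursive sequence in the class of right-angled Artin groups.

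A uniform algorithm for the finite presentation problem on RAAGs, applied to this sequence, would output a presentation for $\langle S_n'\rangle$. Coupled with the index $[\langle S_n\rangle:\langle S_n'\rangle]$ (recursively bounded by the construction) and a Reidemeister--Schreier / Todd--Coxeter interconversion, one would recover a presentation for $\langle S_n\rangle$ and thereby compute $b_1(\langle S_n\rangle)$, contradicting Theorem \ref{t:main special}(3). This establishes (5). Item (3) then follows by composing with a recursive $\Z$-linear embedding of each $B_n$ into some $\SL_N(\Z)$, since every RAAG is $\Z$-linear; item (4) follows from the Davis--Januszkiewicz/Haglund--Wise embedding of each RAAG as a subgroup of finite index in a right-angled Coxeter group, applied uniformly to the sequence $B_n$.

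The chief obstacle is verifying the uniformity of the virtually special reduction: one must extract from Theorem \ref{t:main special} a genuinely recursive procedure producing the finite-index subgroup $K_n$, the ambient RAAG $A_n$, and the embedding $K_n\hookrightarrow A_n$ (together with analogous recursive data for the subsequent embeddings into $\SL_N(\Z)$ and RACGs), and one must confirm that $[\langle S_n\rangle:\langle S_n'\rangle]$ admits a recursive bound. The algebraic content of each reduction is routine; the real work is this algorithmic bookkeeping attached to the construction in Theorem \ref{t:main special}.
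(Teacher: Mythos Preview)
Your outline for (1) and (2) matches the paper's. For (3)--(5), your approach is sound in spirit but re-derives by hand what the paper has already packaged as Proposition~\ref{p: Robustness}: for any recursively enumerable class $\mathcal{C}$ of finitely presented, locally Hopfian groups, a uniform solution to the finite presentation problem in $\mathcal{C}$ yields one in the class of groups that are virtually subgroups of groups in $\mathcal{C}$. Since right-angled Artin and Coxeter groups form recursively enumerable classes of residually finite (hence locally Hopfian) groups, and the $G_n$ of Theorem~\ref{t:main special} are virtually subgroups of such groups by Haglund--Wise, items (4) and (5) follow immediately by contraposition. For (3) the paper is more direct than you: virtually special groups are already $\Z$-linear, so the $G_n$ themselves lie in the class and there is no need to route through RAAGs first.

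The ``chief obstacle'' you flag --- producing the finite-index subgroup $K_n$, the ambient RAAG, and the embedding recursively in $n$ --- is precisely what Proposition~\ref{p: Robustness} dispatches, and it does so not by explicit construction but by naive search: enumerate finite-index subgroups of $G_n$ via Reidemeister--Schreier, enumerate presentations of groups in $\mathcal{C}$ via Tietze moves, and wait for a match (local Hopficity guarantees the match is faithful). Your worry about passing from a presentation of the finite-index subgroup $\langle S_n'\rangle$ back up to one of $\langle S_n\rangle$ is likewise handled there, via Lemma~\ref{l: Presenting extensions}; note that this direction is not Reidemeister--Schreier but rather an extension-presentation argument requiring a solution to the word problem in $G_n$. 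So your plan would succeed once the bookkeeping is done, but the paper's abstraction makes the proof a few lines long and eliminates exactly the obstacle you identify.
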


The pathological $\Z$-linear groups that lead to part (\ref{item: Z-linear}) of Corollary \ref{c: Examples} arise in the first instance as a sequence of presentations, together with generating sets for certain finitely presentable subgroups.  However,
the sharper arguments in Section 6 provide explicit faithful representations of these groups.

\begin{corA}\label{c: Matrices}
There exists a recursive sequence of triples $(m_n,r_n,S_n)$
with $m_n,r_n\in\N$ and  $S_n\subseteq \SL_{m_n}(\Z)$ a finite set, such that:
\begin{enumerate}
\item all of the groups $\Lambda_n:=\langle S_n\rangle\subseteq \SL_{m_n}(\Z)$ are finitely presentable;
\item the set of integers $\{n\in\N\mid b_1(\Lambda_n)=r_n\}$ is recursively enumerable but not recursive.
\end{enumerate}
In particular, there is no algorithm that takes as input a finite set of integer matrices that generate a finitely presentable group and outputs a presentation for that group.
\end{corA}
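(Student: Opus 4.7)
The plan is to deduce Corollary \ref{c: Matrices} from Theorem \ref{t:main special} by replacing each abstract presentation $\langle Z_n\mid\Sigma_n\rangle$ with an explicit faithful representation $\rho_n\colon G_n\hookrightarrow\SL_{m_n}(\Z)$. Once such $\rho_n$ are in hand, take $\Lambda_n:=\langle\rho_n(S_n)\rangle$ and $r_n:=b_1(G_n)$; the latter is computable from the finite presentation by Smith normal form on the abelianised relation matrix. Faithfulness of $\rho_n$ gives $\Lambda_n\cong\H_n$, hence $b_1(\Lambda_n)=b_1(\H_n)$, and so
\[
\{n\in\N\mid b_1(\Lambda_n)=r_n\}=\{n\in\N\mid b_1(\H_n)=b_1(G_n)\}.
\]
The recursive enumerability but non-recursiveness of this set is then inherited directly from part~(3) of Theorem \ref{t:main special}.

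The substance of the proof is therefore the uniform construction of the $\rho_n$. Since $\Gamma_n$ is virtually special, a finite-index subgroup embeds in a right-angled Artin group $A_n$; passing to its normal core gives a finite-index normal subgroup $N_n\trianglelefteq\Gamma_n$ that still embeds in $A_n$. Every RAAG carries an explicit faithful representation into $\SL_k(\Z)$, for example via its action on the universal cover of the Salvetti complex (or via the Humphries/Hsu--Wise construction). The induced-representation construction then upgrades a faithful linear representation of $N_n$ to a faithful linear representation of $\Gamma_n$ of dimension $k[\Gamma_n:N_n]$, and a block-diagonal sum yields $\rho_n$ on $G_n=\Gamma_n\times\Gamma_n$ inside some $\SL_{m_n}(\Z)$.

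The main obstacle is making this pipeline uniformly recursive in $n$: extracting the special structure, the index of a finite-index special subgroup, the graph defining $A_n$, and the associated faithful representation purely from an abstract finite presentation of $\Gamma_n$ is not a priori effective. The authors' parenthetical that ``the sharper arguments in Section~6 provide explicit faithful representations'' signals that the $\Gamma_n$ of Theorem \ref{t:main special} will in fact be constructed directly as concrete subgroups of a controlled family of right-angled Artin or Coxeter groups, bypassing any search for special structure and allowing $m_n$ and the matrix generators $\rho_n(Z_n)$ to be written down as an explicit recursive function of $n$. Granted such an explicit construction, the deductions above are routine and yield the corollary.
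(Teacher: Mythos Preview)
Your overall plan matches the paper exactly: apply Theorem~\ref{t:main special}, set $r_n=b_1(G_n)$ (computable from the presentation), and push $S_n$ through a faithful representation $\rho_n:G_n\hookrightarrow\SL_{m_n}(\Z)$ so that $\Lambda_n\cong\H_n$ and the undecidable set transfers verbatim. Your identification of the crux---making the passage from the abstract presentation of a virtually special group to an explicit integral linear representation \emph{effective}---is also correct.

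Where you diverge from the paper is in your guess about how this crux is resolved. You predict that Section~6 will sidestep the issue by constructing the $\Gamma_n$ \emph{ab initio} as explicit subgroups of specific right-angled Artin or Coxeter groups, so that no search is needed. That is not what the paper does. Instead, the paper proves a general lemma (Lemma~\ref{l: Special representation}) asserting that there is an algorithm which, given \emph{any} finite presentation of a virtually special group $G$, computes an explicit faithful homomorphism $G\to\SL_m(\Z)$. The algorithm is a blind search: enumerate finite-index subgroups of $G$ by Reidemeister--Schreier; in parallel, enumerate finite cube complexes and use the combinatorial Link Condition and the combinatorial definition of C-special to filter for non-positively curved C-special ones; naively search for an isomorphism between some finite-index subgroup $H_i$ and some $\pi_1(X)$. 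Termination is guaranteed precisely because $G$ is assumed virtually special. One then composes with the Haglund--Wise embedding $\pi_1(X)\hookrightarrow C_N$ into a right-angled Coxeter group, with the standard faithful representation $C_N\hookrightarrow\SL_d(\Z)$, and finally induces up from $H_i$ to $G$.

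So your assertion that ``extracting the special structure \ldots\ purely from an abstract finite presentation of $\Gamma_n$ is not a priori effective'' is in fact the statement the paper refutes. Your speculated alternative (tracing through the Haglund--Wise Rips construction to get the embedding directly) would likely also work, and would give better control on $m_n$, but it is not the route taken here. A minor point: the paper goes via right-angled \emph{Coxeter} groups (C-special) rather than Artin groups, because the faithful integral representation of a Coxeter group is classical and entirely explicit.
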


This provides our first example of a single group with unsolvable finite presentation problem: $\SL_{\infty}(\Z)$,  the direct limit of the `top left' inclusions $\SL_n(\Z)\hookrightarrow \SL_{n+1}(\Z)$.  Of course, this example is not finitely presentable; indeed it is not even finitely generated.   The latter defect can be remedied by applying a standard technique of Higman, Neummann and Neumann that embeds any countable group in a finitely generated group \cite{higman_embedding_1949}.  After that
{\mb{(as Collins \cite{collins} did),}} one could use a refinement of Higman's Embedding Theorem, due to Clapham \cite{clapham_embedding_1967}, to embed $\SL_\infty(\Z)$ in a finitely presented group with solvable word problem.  
{\mb{In Section \ref{s: Polynomial} we use  a more controlled embedding  due to Birget, Olshanskii, Rips and Sapir \cite{birget_isoperimetric_2002} to prove the following sharper result.}}

\begin{thmA}\label{t:single group}
There exists a finitely presented group $G$ that has a polynomial Dehn function but in which there is no algorithm to compute the first Betti number of finitely presentable subgroups.  In particular, the finite presentation problem is unsolvable in $G$.
\end{thmA}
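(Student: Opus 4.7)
The plan is to embed the entire sequence of subgroups $\Lambda_n\subseteq\SL_{m_n}(\Z)$ produced by Corollary \ref{c: Matrices} into a single finitely presented group $G$ of polynomial Dehn function, in such a way that the generating sets $S_n$ of $\Lambda_n$ are uniformly (in $n$) computable as finite sets of words in a fixed generating set of $G$. Once this is done, any putative algorithm that takes a finite generating set for a finitely presentable subgroup of $G$ and computes the first Betti number would, when applied to (the image of) $S_n$, determine $b_1(\Lambda_n)$; this would decide the non-recursive set $\{n\mid b_1(\Lambda_n)=r_n\}$ of Corollary \ref{c: Matrices}, a contradiction, establishing the theorem. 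The finite presentation problem is then automatically unsolvable in $G$, since from any finite presentation of $\Lambda_n$ one can compute $b_1(\Lambda_n)$ by abelianising and applying standard Smith normal form techniques.

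The key tool for constructing $G$ is the theorem of Birget, Olshanskii, Rips and Sapir \cite{birget_isoperimetric_2002} stating that a finitely generated group embeds in a finitely presented group with polynomial Dehn function if and only if its word problem is decidable in non-deterministic polynomial time. To apply this, first realise every $\Lambda_n$ inside $\SL_\infty(\Z)$ using the explicit faithful representations provided by Corollary \ref{c: Matrices}; the word problem of $\SL_\infty(\Z)$ itself is decidable in polynomial time because matrix multiplication is. Next, apply a Higman--Neumann--Neumann-style embedding, implemented with enough complexity control, to produce a finitely generated overgroup $H\supseteq\SL_\infty(\Z)$ whose word problem still lies in NP and in which the sequence $(S_n)$ remains effectively expressible. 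Finally, feed $H$ into the \cite{birget_isoperimetric_2002} embedding theorem to obtain a finitely presented group $G$ of polynomial Dehn function containing $H$; the images of the sets $S_n$ generate subgroups of $G$ isomorphic to $\Lambda_n$, hence finitely presentable, and are algorithmically computable from $n$.

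The main obstacle is the middle step: producing an HNN-style embedding of $\SL_\infty(\Z)$ into a finitely generated group whose word problem is not merely decidable but lies in NP. Clapham's refinement \cite{clapham_embedding_1967} of Higman's Embedding Theorem guarantees only solvability of the word problem and therefore does not suffice here. I expect that the paper handles this by exploiting the fact that the sequence $(m_n,r_n,S_n)$ is recursive with polynomial-time verifiable certificates, and by invoking the ``controlled'' embedding machinery internal to \cite{birget_isoperimetric_2002}, which packages together finite generation and the polynomial Dehn function guarantee into a single statement whose only hypothesis is an NP bound on the word problem of the input group.
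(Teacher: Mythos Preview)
Your overall architecture is exactly the paper's: pass through $\SL_\infty(\Z)$ (using Corollary~\ref{c: Matrices}), embed it in a finitely generated group via an HNN-style Higman--Neumann--Neumann construction, and then invoke Birget--Olshanskii--Rips--Sapir to obtain a finitely presented group with polynomial Dehn function. Your identification of the main obstacle --- showing that the HNN step does not destroy the NP bound on the word problem --- is also exactly right.

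Where your expectation diverges from the paper is in how that obstacle is resolved. The paper does \emph{not} appeal to any controlled-embedding machinery internal to \cite{birget_isoperimetric_2002}; instead it proves a standalone result (Theorem~\ref{t: Polynomial word problem}) that the classical HNN embedding of a countable group $\Gamma$ into a four-generator group preserves polynomial-time (resp.~NP) solvability of the word problem. The proof is a direct complexity analysis via Britton's Lemma: given a word, one searches for pinches $tut^{-1}$ with $\eta(u)\in H_0$ (or $t^{-1}vt$ with $\eta(v)\in H_1$), tests membership in $H_0,H_1$ using the polynomial-time word problem for $\Gamma$, and replaces the pinch. One then bounds the number of iterations and the growth of word length under pinch removal; the key bookkeeping is that each removal increases the encoded length by at most a fixed multiple of the number of $s$-components, which does not increase. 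This is routine but not entirely obvious, and the paper notes it could not find it in the literature.

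There is also a detail you glossed over: to make sense of ``polynomial-time word problem'' for the infinitely generated group $\SL_\infty(\Z)$, the paper fixes an encoding $c_n\mapsto b^n a b^{-n}$ and then must choose the generating set carefully. It takes $c_k=e_{i,j}$ when $k=2^i3^j$ (and $c_k=1$ otherwise), so that $i,j\le\log_2 k$; this keeps the matrix dimensions logarithmic in the generator index and makes the word problem genuinely polynomial in the encoded length. Simply saying ``matrix multiplication is polynomial'' is not enough without this indexing, since a naive enumeration of the $e_{i,j}$ could force matrices of size linear in the index.
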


The reader will recall that a group is termed {\em{coherent}} if all of its finitely generated subgroups are finitely presentable. Groves and Wilton \cite{groves_enumerating_2009} studied  \emph{effective coherence}, which for coherent groups is equivalent to having a solvable finite presentation problem. For groups that are not coherent, there is a natural companion to the finite presentation problem.

\begin{definition}\label{d:def2}
One says that the {\em{finite presentability problem}} for a group $\G$ is solvable if there is an algorithm that, given a finite subset $S\subseteq\G$, will determine whether or not the subgroup generated by $S$ is finitely presentable.

Similarly, one says that the finite presentability problem for a class $\mathcal{C}$ of finitely presentable groups is \emph{uniformly solvable} if there is an algorithm that, given a finite
presentation for a group $\G\in\mathcal{C}$ and a finite subset $S\subseteq \G$, will determine whether or not the subgroup generated by $S$ is finitely presentable.
\end{definition}

At first glance, it may seem improbable that there should exist reasonable groups  for which the finite presentability problem is unsolvable but the finite presentation problem is solvable.  But in fact there are many such groups, as we shall explain in Section \ref{s: Examples}. The simplest example is the direct product of two non-abelian free groups.

\mb{An underlying theme of this paper is the level of pathology to be found amongst the finitely presentable
subgroups of direct products of hyperbolic groups. There is a basic template for finding such pathologies: one begins with
a complicated finitely presented group, applies some form of the Rips construction to it, and then takes a fibre product
(see Subsections \ref{ss:rips} and \ref{ss:fib}). This scheme originates in \cite{baumslag_fibre_2000} and relies on the 1-2-3 Theorem proved there; see also \cite{bms}.}

This paper is organised as follows. In Section \ref{s2} we examine how the solvability of the (uniform) finite presentation problem is affected by passage to subgroups and overgroups of finite index. In Section \ref{s3} we gather from the literature such constructions and results as we need in our Main Construction. Section \ref{s:homology} contains homology calculations that are needed in the proofs of Theorem \ref{t:main} and Theorem \ref{t:main special}; {\mb{the arguments here are inspired by those used in
Section 3 of \cite{BMiller10} to prove a precursor of Theorem B.}} Section \ref{s: Proofs} contains the proofs themselves, as well as the proof of Corollary \ref{c: Examples}.  In Section \ref{s: Matrix groups} we explain how to compute algorithmically an explicit faithful representation of a virtually special group, and we deduce Corollary \ref{c: Matrices}.  Section \ref{s: Polynomial} contains a proof that any countable group with polynomial-time word problem can be embedded in a finitely generated group with polynomial-time word problem; from this we deduce Theorem \ref{t:single group}.  In Section \ref{s: Examples} we compare and contrast classes of groups that do and do not admit a uniform solution to the finite presentation problem. Section \ref{s:qus} contains a list of open questions.

\medskip

We thank Charles F.~Miller III for extensive and helpful comments on an earlier draft of this paper.

\section{Virtual Considerations}\label{s2}

Let $\mathcal{C}$ be a class of  groups, closed under isomorphism.  We shall prove that if $\mathcal{C}$ is recursively enumerable and admits a uniform solution to the finite presentation problem, then so (modulo a technicality)
do the class $\mathcal{SC}$  of all finitely presented subgroups of groups from  $\mathcal{C}$ and the class  
\[
\mathcal{VC}=\{G\mid \exists G_0\in\mathcal{C},\ G_0<G,~|G:G_0|<\infty\}
\]
of  `virtually $\mathcal{C}$ groups'.

First we consider how to present an extension of one finitely presented group by another.  For this purpose, it is convenient to adopt the following temporary notation: an arrow over a letter in a group presentation indicates an ordered set, e.g. $\vec{a}$, and the same letter with a subscript indicates a typical element of that set, e.g.~$a_i$; words are regarded as functions that take ordered sets as arguments, so $\vec{r}(\vec{a})$ is an ordered set of words $r_j(\vec{a})$ in the letters $a_i^{\pm 1}$; the notation $\vec{a}^{\vec{b}}$ represents the bi-ordered set of words of the form $a_i^{b_j}\equiv b_j^{-1}a_ib_j$.

\begin{lemma}\label{l: Presenting extensions}
Consider a short exact sequence
\[
1\to N\to G\to Q\to 1,
\]
with $N = \langle \vec{a}\mid \vec{r}(\vec{a})\rangle$ and $Q = \langle \vec{b}\mid \vec{s}(\vec{b})\rangle$.  For each $b_k$ in $\vec{b}$, let $\beta_k$ be an element of $G$ mapping to $b_k\in Q$. Noting that each $s_l(\vec{\beta})\in N$, select a word $\sigma_l(\vec{a})$ such that $\sigma_l(\vec{a})=s_l(\vec{\beta})$ in $G$.  Likewise, noting that $a_i^{\beta_k}\in N$ for all indices $i$ and $k$, select words $\alpha_{i,k}(\vec{a})$ such that $a_i^{\beta_k}=\alpha_{i,k}(\vec{a})$ in $G$. Then
\[
G = \langle \vec{a},\vec{b}\mid \vec{r}(\vec{a}),~\vec{\sigma}(\vec{a})=\vec{s}(\vec{b}),\ \vec{a}^{\vec{b}} = \vec{\alpha}(\vec{a})\rangle 
\]
\end{lemma}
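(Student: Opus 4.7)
The plan is to exhibit the obvious homomorphism from the group $H$ defined by the right-hand side presentation onto $G$, and then establish injectivity by putting every element of $H$ into a normal form $u(\vec{a})\,v(\vec{b})$ and comparing with the structure of $G$.

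First I would verify that the assignment $a_i \mapsto a_i$ (viewing $N\hookrightarrow G$) and $b_k\mapsto\beta_k$ extends to a homomorphism $\phi\colon H\to G$. Each defining relation of $H$ holds in $G$ by construction: the words $\vec{r}(\vec{a})$ are trivial in $N\subseteq G$, the equalities $\sigma_l(\vec{a})=s_l(\vec{\beta})$ hold in $G$ by the choice of $\sigma_l$, and the equalities $a_i^{\beta_k}=\alpha_{i,k}(\vec{a})$ hold by the choice of $\alpha_{i,k}$. Surjectivity is immediate because the $\vec{\beta}$ surject onto $Q$ while the $\vec{a}$ generate $N$, and $N$ and any preimages of a generating set of $Q$ together generate $G$.

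Next I would establish a normal form in $H$. The relations $\vec{a}^{\vec{b}}=\vec{\alpha}(\vec{a})$ (together with their consequences for inverse letters) say that $\langle\vec{a}\rangle\le H$ is normalised by each $b_k^{\pm 1}$, hence $\langle\vec{a}\rangle$ is normal in $H$. Consequently any word in $\vec{a}\cup\vec{b}$ is equal in $H$ to a word of the form $u(\vec{a})\,v(\vec{b})$, obtained by iteratively pushing $a$-letters past $b$-letters using these relations.

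Finally I would prove that $\phi$ is injective. Suppose $\phi\bigl(u(\vec{a})\,v(\vec{b})\bigr)=1$ in $G$. Projecting to $Q$ gives $v(\vec{b})=1$ in $Q$, so in the free group on $\vec{b}$ the word $v(\vec{b})$ is a product of conjugates of the $s_l(\vec{b})^{\pm 1}$. Using the relations $\sigma_l(\vec{a})=s_l(\vec{b})$ to replace each such $s_l$ by $\sigma_l(\vec{a})$, and then using $\vec{a}^{\vec{b}}=\vec{\alpha}(\vec{a})$ to eliminate the remaining conjugating $b_k$'s, we rewrite $v(\vec{b})$ in $H$ as a word $w(\vec{a})$ in the $a$-letters alone. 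Hence $u(\vec{a})\,v(\vec{b})=u(\vec{a})\,w(\vec{a})$ in $H$. Now $\phi$ sends this element into $N$ and to $1$; since $N=\langle\vec{a}\mid\vec{r}(\vec{a})\rangle$ is presented by $\vec{r}$, the word $u(\vec{a})\,w(\vec{a})$ is a consequence of $\vec{r}(\vec{a})$ in the free group on $\vec{a}$, and therefore trivial in $H$.

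The main obstacle is the final paragraph: one must be careful that the rewrite of $v(\vec{b})$, together with subsequent absorption of conjugating $b_k$'s into $\langle\vec{a}\rangle$, can be performed using only the stated relations and not some further identity. This amounts to a bookkeeping argument powered by the normality of $\langle\vec{a}\rangle$ in $H$, but it is the step where one genuinely uses all three families of relators simultaneously.
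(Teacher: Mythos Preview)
Your argument is correct. The surjection $\phi$ is constructed exactly as in the paper, and your injectivity proof is sound: the crucial point---that a word in the $a$-letters mapping to $1$ in $N$ must already lie in the normal closure of $\vec r$ inside $F(\vec a)$, and hence be trivial in $H$---is precisely what is needed, and you state it correctly.

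The paper takes a different, more structural route to injectivity. Rather than introducing a normal form and rewriting $v(\vec b)$ explicitly, it observes that the subgroup $\widehat K=\langle\vec a\rangle\le H$ is normal (as you also note), sets $\widehat Q=H/\widehat K$, and compares the short exact sequence $1\to\widehat K\to H\to\widehat Q\to 1$ with $1\to N\to G\to Q\to 1$ via a commutative diagram. Killing the $a_i$ in the presentation of $H$ visibly yields $\langle\vec b\mid\vec s(\vec b)\rangle$, so $\widehat Q\to Q$ is an isomorphism; and $\widehat K\to N$ is injective because every relation of $N$ already holds in $\widehat K$ (equivalently, the obvious map $N\to\widehat K$ splits it). A five-lemma/diagram chase then gives injectivity of $\phi$. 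This avoids the bookkeeping of your rewriting step and makes transparent exactly which two facts drive the proof, whereas your approach has the virtue of being entirely elementary and self-contained, with no appeal to diagram lemmas.
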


\begin{proof}
Let $\widehat{G}$ be the group defined by the given presentation.  By
constuction, there is an epimorphism $\eta:\widehat{G}\to G$ defined by $a_i\mapsto a_i$ and $b_k\mapsto\beta_k$.  We must prove that $\eta$ is injective.  Let $\widehat{K}$ be the subgroup of $\widehat{G}$ generated by $\vec{a}$ (this is normal by construction) and let $\widehat{Q}$ be the group obtained from $\widehat{G}$ by setting each $a_i$ equal to $1$.  Then we have the following commutative diagram.\\
\centerline{
\xymatrix{
	1\ar@{>}[r] & \widehat{K}\ar@{>}[r]\ar@{>}[d] & \widehat{G}\ar@{>}[r]\ar@{>}[d]^{\eta} & \widehat{Q}\ar@{>}[r]\ar@{>}[d] & 1\\
	1\ar@{>}[r] & K\ar@{>}[r] & G\ar@{>}[r] & Q\ar@{>}[r] & 1\\  
}}
The map $\widehat{Q}\to Q$ is an isomorphism and the map $\widehat{K}\to K$ is injective, because every relation of $K$ also holds in $\widehat{K}$.  It follows that $\eta$ is injective as required.
\end{proof}

\begin{remark}
If one has a means of checking equalities in $G$, such as an explicit embedding into a recursively presented group, then a naive search will always find explicit sets of words $\vec{\sigma}$ and $\vec{\alpha}$, so the  construction of the above presentation becomes effective. 
\end{remark}

We remind the reader that a group $G$ is termed {\em{Hopfian}} if every
epimorphism $G\to G$ is an isomorphism, and {\em{locally Hopfian}} if
all finitely generated subgroups of $G$ are Hopfian. Residually
finite groups are locally Hopfian.

\begin{proposition}\label{p: Robustness}
If $\mathcal{C}$ is a recursively enumerable class of finitely presented, locally Hopfian groups and the finite presentation problem is uniformly solvable in $\mathcal{C}$, then the finite presentation problem is uniformly solvable in $\mathcal{SC}$ and in $\mathcal{VC}$.
\end{proposition}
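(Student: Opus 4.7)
The plan is to handle $\mathcal{SC}$ first, then bootstrap to $\mathcal{VC}$ via Lemma~\ref{l: Presenting extensions}.

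\textbf{The $\mathcal{SC}$ case.} Given $(H=\langle A\mid R\rangle, S)$ with $H\in\mathcal{SC}$ and $S$ generating a finitely presentable $\Lambda\leq H$, the strategy is to locate an embedding $\phi\colon H\hookrightarrow G$ into some $G\in\mathcal{C}$ and then apply the uniform finite presentation algorithm for $\mathcal{C}$ to $(G,\phi(S))$ to obtain a presentation of $\phi(\Lambda)\cong\Lambda$. I would dovetail: (i) the r.e.\ enumeration of presentations in $\mathcal{C}$; (ii) candidate maps $\phi$, specified by images of $A$ as words in the generators of $G$; (iii) r.e.\ witnesses that $\phi$ is a homomorphism (each $\phi(r)$ expressed as a product of conjugates of relators of $G$); (iv) runs of the uniform algorithm for $\mathcal{C}$ applied to $(G,\phi(A))$, which upon termination produce a presentation of $\phi(H)\leq G$; and (v) an r.e.\ search for an abstract isomorphism $\phi(H)\cong H$. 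Local Hopfianity closes the loop: because $H$ is a finitely generated subgroup of some locally Hopfian $G\in\mathcal{C}$, $H$ is Hopfian, so the tautological surjection $H\twoheadrightarrow\phi(H)$ induced by $\phi$ is an isomorphism as soon as an abstract isomorphism $\phi(H)\cong H$ is found, forcing $\phi$ to be injective. For the ``correct'' $(G,\phi)$ guaranteed by $H\in\mathcal{SC}$, all of (i)--(v) terminate successfully, after which one final call to the uniform algorithm in $\mathcal{C}$ on $(G,\phi(S))$ outputs the desired presentation.

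\textbf{The $\mathcal{VC}$ case.} Given $(G,S)$ with $G\in\mathcal{VC}$, I would enumerate finite-index subgroups $G_0\leq G$ via transitive permutation representations $G\to\mathrm{Sym}(n)$ (decidable to verify, since $\mathrm{Sym}(n)$ is finite), presenting each point-stabilizer by Reidemeister--Schreier rewriting. For each $G_0$ I would r.e.-search for a $\Gamma\in\mathcal{C}$ together with mutually inverse homomorphisms witnessing $G_0\cong\Gamma$. Once such a $G_0$ is located, replace it by the normal core $G_0'=\bigcap_{g\in G}gG_0g^{-1}$, a finite-index normal subgroup of $G$ that lies in $\mathcal{SC}$ (as a subgroup of $G_0\in\mathcal{C}$). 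Schreier rewriting yields a generating set for the finite-index normal subgroup $\Lambda\cap G_0'\trianglelefteq\Lambda$; the $\mathcal{SC}$ algorithm just established produces a presentation of $\Lambda\cap G_0'$ from this data; the finite quotient $\Lambda/(\Lambda\cap G_0')$ is read off from its multiplication table in the coset action; and Lemma~\ref{l: Presenting extensions} assembles these ingredients into a presentation of $\Lambda$.

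The main obstacle in both cases is the verification of ``injectivity''--type statements: that a candidate $\phi\colon H\to G$ actually embeds, or that a candidate finite-index $G_0\leq G$ actually lies (abstractly) in $\mathcal{C}$. Such problems are algorithmically intractable in general, and it is exactly the local Hopfian hypothesis that lets us reduce them to r.e.\ searches for abstract isomorphisms of finitely presentable groups, which are guaranteed to terminate precisely when the sought isomorphisms exist.
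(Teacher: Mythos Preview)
Your approach is correct and tracks the paper's proof closely: for $\mathcal{SC}$, dovetail over ambient groups in $\mathcal{C}$ and candidate homomorphisms, using local Hopfianity to certify injectivity; for $\mathcal{VC}$, descend to a finite-index normal subgroup lying in $\mathcal{SC}$ (the paper instead works directly with the $\mathcal{C}$ algorithm applied inside the finite-index subgroup $G_0\in\mathcal{C}$, but this difference is cosmetic), solve there, and reassemble via Lemma~\ref{l: Presenting extensions}. One minor variation is that you certify injectivity at the level of $H$ itself, whereas the paper does so at the level of $\langle S\rangle$; both are fine.

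There is, however, one genuine subtlety you glide over, and it is exactly the point the paper pauses to isolate. To make the application of Lemma~\ref{l: Presenting extensions} effective, the generators $\vec{a}$ of your presentation of $N=\Lambda\cap G_0'$ must be available as \emph{explicit elements of $\Lambda$}, so that the conjugates $a_i^{\beta_k}$ and the words $\sigma_l$ can actually be computed inside the ambient group. But your $\mathcal{SC}$ routine, as stated, returns only an \emph{abstract} presentation: the black-box algorithm for $\mathcal{C}$ is not assumed to record how its output generators correspond to the input generating set. The remedy (which the paper spells out) is a further naive search for a homomorphism $\langle\vec{a}\mid\vec{r}\rangle\to\langle T\rangle$ sending each $a_i$ to a word in the Schreier generators $T$, together with a parallel search for surjectivity; local Hopfianity of $\langle T\rangle$ then guarantees that the surjection eventually found is an isomorphism, realising each $a_i$ as an explicit word in $T\subset S^{\pm*}$. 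With this step inserted, your argument is complete.
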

\begin{proof}
The proof that the finite presentation problem is solvable in $\mathcal{SC}$
follows that of Lemma 1.4 in \cite{groves_enumerating_2009}; it is straightforward, so we
omit the details. However, we draw the reader's attention to the
following subtlety that was overlooked in \cite{groves_enumerating_2009}. In the course of
the proof one reaches a stage where one has a finite subset
$S\subset G\in\mathcal{SC}$ and
a finite presentation $\<A\mid R\>$ of $\<S\>\subset G$, and one wants
to express the elements of $S$ as words in the generators $A$. To do
this, one can employ a naive search for homomorphisms  $\<A\mid R\>\to
\<S\>\subset  G$, choosing words in $S^{\pm 1}$ as putative images for
each $a\in A$ and checking that the defining relations hold in $ G$. A
further naive search will find one of these homomorphisms that is
surjective: working in parallel on all of the homomorphisms found, one
looks for words in $A^{\pm 1}$ mapping to each $s\in S$. Because $G$ is
locally Hopfian, the surjective map that one eventually finds is an
isomorphism.

Suppose that we are given a finite presentation for a group $G\in\mathcal{VC}$ and a finite set $S$ of words in the generators that generate a finitely presentable subgroup. We must construct a finite presentation for $\H=\<S\>$. 

The Reidemeister--Schreier Process can be used to enumerate presentations of subgroups of $G$ of finite index.  Recursively enumerating all possible presentations of groups in $\mathcal{C}$ using Tietze transformations and comparing them to presentations of finite-index subgroups of $G$, we will eventually find a finite-index subgroup $G_0\in\mathcal{C}$ and a finite presentation for $G_0$. 

A further application of the Reidemeister--Schreier Process will eventually find a finite presentation for a finite-index subgroup $G_1$ of $G_0$ that is normal in $G$.  Let $Q=G/G_1$ be the finite quotient group.

One can compute a finite set  $T$ of generators for $\H_1=G_1\cap \H$, as a set of words in $F(S)$ using Stallings's
method (see for example \cite{stallings_topology_1983}).  The hypothesised solution to the
finite presentation problem in $\mathcal{C}$ enables us to compute a finite presentation $\langle T\mid R\rangle$ for $\H_1$, and it is straightforward to compute a presentation $\langle S\mid U\rangle$ for $B=\H/\H_1\subseteq Q$.  The result now follows by Lemma \ref{l: Presenting extensions}.
\end{proof}

\section{The ingredients of the main construction}\label{s3}

\subsection{Collins--Miller groups} 

The following construction shows that there is no algorithm to determine if a finite group-presentation is aspherical and no algorithm that computes the second homology of the group (there is an earlier proof of this second fact due to Cameron Gordon \cite{gordon_embedding_1995}, cf. \cite{bogley_homological_2002}).  

\begin{theorem}[Collins--Miller \cite{collins_word_1999}]\label{t: Collins--Miller}
There exists a finite set $X$ and a recursive sequence of finite subsets $R_n \subseteq X^{\pm *}$,  of a fixed cardinality
greater than $X$, such that:
\begin{enumerate}
\item the group $Q_n = \<X \mid R_n\>$ is either trivial or else the given presentation is aspherical;
\item the set $\{n\in\N\mid Q_n\cong 1\}$ is recursively enumerable but not recursive.
\end{enumerate}
\end{theorem}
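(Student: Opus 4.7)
The plan is to encode the halting problem into the triviality problem for a uniform family of finite group presentations, arranging the encoding so that whenever the group fails to collapse to the trivial group, the 2-complex of the given presentation is automatically aspherical. Fix a recursively enumerable but non-recursive subset $E\subseteq\N$, for instance the set of halting indices of Turing machines on blank input. By an Adian--Rabin / Boone--Britton style reduction tailored to the situation, one produces a recursive sequence of finite presentations $\langle X\mid R_n\rangle$ on a fixed generating set with $|R_n|$ of fixed cardinality greater than $|X|$, such that $Q_n:=\langle X\mid R_n\rangle\cong 1$ if and only if $n\in E$. The given presentation is assembled from iterated HNN extensions and amalgamated free products that implement the computation of the $n$-th Turing machine; the fact that there is a fixed list of stable letters and a bounded number of Britton-tuples in the encoding is what produces the uniform bounds on $|X|$ and $|R_n|$.

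The main technical step, and the main obstacle, is to arrange every building block of the tower to have an aspherical presentation and to guarantee that asphericity is preserved through each HNN or amalgamation stage. The standard tool is the combinatorial Britton pinch analysis: a reduced spherical picture over an HNN extension $\langle H,t\mid t^{-1}At=B\rangle$ decomposes along its $t$-corridors into sub-pictures over the vertex group $H$, and if each vertex presentation is aspherical (with tame edge subgroups), these sub-pictures are forced to be trivial, whence so is the original picture. Collins and Miller's contribution is to rework the classical Boone--Novikov Turing-machine encoding so that every elementary step is implemented by one of these aspherical HNN or amalgamation moves, and in particular never by the ``killer'' relators of the classical Adian--Rabin reduction whose presentation 2-complexes carry essential 2-spheres. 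With this done, the only way $\langle X\mid R_n\rangle$ can fail to be aspherical is that the encoded Turing machine has halted, at which point the group collapses to the trivial group.

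Once the aspherical tower is in place the rest is routine. The set $\{n\mid Q_n\cong 1\}$ is recursively enumerable because triviality of a finite presentation is semi-decidable: enumerate consequences of $R_n$ and wait until every generator in $X$ is reduced to the empty word. By construction this set coincides with the undecidable $E$, so it is not recursive, and asphericity of the non-trivial $Q_n$ gives (1). All the substantive work is concentrated in the middle step; the reduction from halting and the uniformisation of the generating set and relator count are standard bookkeeping around the asphericity-preserving HNN/amalgamation construction.
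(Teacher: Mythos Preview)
The paper does not prove this theorem: it is quoted from the literature with attribution to Collins and Miller \cite{collins_word_1999}, and no argument is supplied beyond the brief remarks leading to Addendum~\ref{a:CM}. So there is no ``paper's own proof'' to compare your proposal against.

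As for your proposal itself: what you have written is an accurate high-level summary of the Collins--Miller strategy, but it is not a proof. Every sentence that carries weight is a pointer to work done elsewhere: ``by an Adian--Rabin / Boone--Britton style reduction tailored to the situation, one produces\ldots'', ``Collins and Miller's contribution is to rework the classical Boone--Novikov encoding so that\ldots'', ``the standard tool is the combinatorial Britton pinch analysis''. You correctly identify that the heart of the matter is building the encoding out of HNN and amalgam stages whose asphericity can be certified inductively via spherical pictures, and that the usual Adian--Rabin killer relators must be avoided because they create essential $2$-spheres. But you do not actually exhibit the tower, verify the associated-subgroup conditions needed for asphericity to propagate through an HNN stage, or check that the relator count stays uniformly bounded and exceeds $|X|$. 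A grader would regard this as a correct synopsis of the cited paper rather than an independent argument; if the intent is merely to record why the theorem is plausible before citing it, the write-up is fine, but it should not be labelled a proof.
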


As there is a simple algorithm to compute $H_1(Q_n,\Z)$, the subsequence consisting of those $Q_n$ which are perfect is recursive; we pass to this subsequence.  Also, since the presentation of $Q_n$ is aspherical when $Q_n\not\cong 1$, we can use its standard 2-complex to compute the homology of $Q_n$. In particular $H_2(Q,\Z)$ is the kernel of the map from the cellular 2-chains to the cellular 1-chains.  Since $|R_n|>|X|$, there are more 2-cells than 1-cells, so this kernel is infinite.

\begin{addendum}\label{a:CM} Further,
\begin{enumerate}
\item[(3)] each of the groups $Q_n = \<X \mid R_n\>$ is perfect, and
\item[(4)] $H_2(Q_n,\Z)$ is infinite if $Q_n\not\cong 1$. 
\end{enumerate}
\end{addendum} 

\subsection{Universal central extensions}

We remind the reader that a central extension of a group $Q$ is a group $\widetilde Q$ equipped with a  homomorphism $\pi:\widetilde Q\to Q$ whose kernel is central in $\widetilde Q$. Such an extension is universal if, given any other central extension $\pi':E\to Q$, there is a unique homomorphism $f:\widetilde Q\to E$ such that $\pi'\circ f =\pi$.

The standard reference for universal central extensions is  \cite{milnor_introduction_1971}, pp. 43--47.  The homological properties that we need are summarized in the following proposition.

\begin{prop}\label{p: Properties of UCEs}
If $Q$ is a perfect group then:
\begin{enumerate}
\item $Q$ has a universal central extension $\widetilde{Q}$ (and, conversely, the existence of a universal central extension implies that $Q$ is perfect);
\item there is a short exact sequence
\[
1\to H_2(Q;\Z)\to \widetilde{Q}\overset{\pi}\to Q\to 1;
\]
\item $\widetilde{Q}$ is super-perfect, i.e. $H_2(\widetilde{Q},\Z)\cong H_1(\widetilde{Q},\Z)\cong 0$.
\end{enumerate}
\end{prop}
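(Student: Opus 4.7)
The plan is to follow the classical construction presented in Milnor. Fix a free presentation $Q = F/R$ and set $\widetilde{Q} := [F,F]/[F,R]$. The canonical map $F \to Q$ restricts to $\pi\colon \widetilde{Q} \to Q$ with kernel $(R \cap [F,F])/[F,R]$, which is $H_2(Q;\Z)$ by Hopf's formula. Since $Q$ is perfect, $F = R \cdot [F,F]$, so $\pi$ is surjective; this gives (2). Centrality of $\ker \pi$ is a one-line commutator identity: for $r \in R$ and $x \in [F,F]$ one has $[r,x] \in [R,F] = [F,R]$, so every element of $\ker\pi$ commutes with every element of $\widetilde{Q}$.

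Next I would show $\widetilde{Q}$ itself is perfect: from $F = R[F,F]$ one obtains $[F,F] = [R[F,F], R[F,F]] \subseteq [F,R] \cdot [[F,F],[F,F]]$. For the universal property in (1), given any central extension $\pi'\colon E \to Q$ with kernel $C$, lift a free basis of $F$ to $E$ to produce $\phi\colon F \to E$ with $\pi' \circ \phi$ the canonical map. Because $\phi(R) \subseteq C$ is central, $\phi$ kills $[F,R]$ and descends to $\widetilde{Q} \to E$; any two such lifts differ by a homomorphism from the perfect group $\widetilde{Q}$ to the abelian group $C$, hence coincide. The converse of (1) compares two lifts $(\pi,0)$ and $(\pi,\mathrm{ab}\circ\pi)$ of the trivial central extension $Q \times Q^{\mathrm{ab}} \to Q$: their forced equality shows $\mathrm{ab}\circ\pi = 0$, and surjectivity of $\pi$ forces $Q^{\mathrm{ab}} = 0$.

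The main work is (3). The vanishing of $H_1(\widetilde{Q})$ is exactly the perfectness established above. For $H_2$, apply the construction to $\widetilde{Q}$ itself to obtain $p\colon \widetilde{\widetilde{Q}} \to \widetilde{Q}$ with central kernel $H_2(\widetilde{Q};\Z)$. The step I expect to be the main obstacle is verifying that the composite $\widetilde{\widetilde{Q}} \to Q$ is itself a central extension, so that the universal property of $\widetilde{Q}$ may be invoked. For $k \in \ker(\widetilde{\widetilde{Q}}\to Q)$ and $e \in \widetilde{\widetilde{Q}}$, the commutator $[k,e]$ maps in $\widetilde{Q}$ to $[p(k),p(e)] = 1$ because $p(k) \in H_2(Q;\Z)$ is central there; hence $[k,e] \in \ker p = H_2(\widetilde{Q};\Z)$, which is central in $\widetilde{\widetilde{Q}}$. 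Therefore $e \mapsto [k,e]$ is a homomorphism into an abelian group, so it vanishes by perfectness of $\widetilde{\widetilde{Q}}$ (applying the first part to $\widetilde{Q}$). Universality of $\widetilde{Q}$ then furnishes $s\colon \widetilde{Q} \to \widetilde{\widetilde{Q}}$ with $p \circ s$ and $\mathrm{id}_{\widetilde{Q}}$ both lifting $\pi$; uniqueness gives $p \circ s = \mathrm{id}$, so the central extension splits as $\widetilde{\widetilde{Q}} \cong \widetilde{Q} \times H_2(\widetilde{Q};\Z)$. Taking abelianizations and using perfectness of $\widetilde{\widetilde{Q}}$ forces $H_2(\widetilde{Q};\Z) = 0$.
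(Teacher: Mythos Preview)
Your argument is correct and is precisely the classical construction from Milnor's \emph{Introduction to algebraic $K$-theory}, pp.~43--47. The paper does not supply its own proof of this proposition; it simply cites Milnor as the standard reference and records the statement. So there is nothing to compare: you have written out in full the proof the paper points to, and each step (Hopf's formula for the kernel, perfectness of $\widetilde{Q}$ via $F=R[F,F]$, the universal property via lifting a free basis, and the $H_2(\widetilde{Q})=0$ argument by showing $\widetilde{\widetilde{Q}}\to Q$ is central and then splitting) is handled correctly.
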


We also need to know that it is possible to write down a presentation for the universal central extension of a perfect group.  The following result is Corollary 3.6 of \cite{bridson_decision_2010}.

\begin{prop}\label{p: Presentation for universal central extension}
Let $Q= \langle x_1,\dots,x_n \mid r_1,\dots,r_m \rangle$ be a finitely presented, perfect group, let $F$ be the free group on $\{x_1,\dots,x_n\}$ and let $R$ be the normal closure of $\{r_1,\dots,r_m\}$ in $F$. Choose $c_i\in [F,F]$   such that $x_ic_i\in R$. Then
\[
\<x_1,\dots,x_n \mid x_ic_i,\ [x_i,\,r_j] \ (i=1,\dots,n; \ j=1,\dots,m) \>
\]
is a finite presentation for $\widetilde{Q}$, the universal central extension of $Q$.
\end{prop}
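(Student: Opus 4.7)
Let $E$ denote the group defined by the proposed presentation. The plan is to show $E \cong \widetilde{Q}$ by verifying the universal property of a universal central extension. First note that all the relators of $E$ lie in $R$: this is explicit for $x_i c_i$, and follows from the normality of $R$ in $F$ for $[x_i, r_j]$. Hence the rule $x_i \mapsto x_i$ descends to a surjective homomorphism $\pi: E \to Q$.

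I would first show that $Z := \ker \pi$ is central. Let $\rho_j \in E$ denote the image of $r_j$, and let $Z' \le E$ be the subgroup (not a priori normal) generated by the $\rho_j$. The relations $[x_i, \rho_j] = 1$ show that each $\rho_j$ lies in the centre of $E$, whence $Z'$ is central, and in particular normal. The quotient $E/Z'$ is presented by adjoining $r_j = 1$ to the presentation of $E$, so every element of $R$ becomes trivial, in particular $x_i c_i$; thus $E/Z' \cong \langle x_i \mid r_j \rangle = Q$, and $Z = Z'$ is central. Note incidentally that $E$ is already perfect, since each relator $x_i c_i$ gives $x_i = c_i^{-1} \in [E,E]$.

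Next I would verify the universal property. Let $\pi': E' \to Q$ be an arbitrary central extension; I must produce a unique $f: E \to E'$ with $\pi' \circ f = \pi$. Pick arbitrary lifts $\hat x_i \in (\pi')^{-1}(x_i)$ and set $\hat c_i := c_i(\hat x_1,\dots,\hat x_n)$. Since $x_i c_i \in R$, the element $w_i := \hat x_i \hat c_i$ satisfies $\pi'(w_i) = 1$, so $w_i \in \ker \pi' \subseteq Z(E')$. Define the corrected lift $\tilde x_i := \hat x_i w_i^{-1} = \hat c_i^{-1}$. The key observation is that $c_i \in [F,F]$, so the evaluation of $c_i$ is invariant under central perturbation of its arguments; thus $c_i(\tilde x_1,\dots,\tilde x_n) = \hat c_i$ and therefore $\tilde x_i \cdot c_i(\tilde x_1,\dots,\tilde x_n) = \hat x_i w_i^{-1} \hat c_i = w_i^{-1} w_i = 1$ in $E'$. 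The remaining relators $[x_i, r_j]$ hold automatically, because $r_j(\tilde x) \in \ker \pi' \subseteq Z(E')$. Hence $f(x_i) := \tilde x_i$ extends to a well-defined homomorphism over $Q$. For uniqueness, any other lift $g: E \to E'$ over $Q$ sends $x_i$ to an element that differs from $\hat x_i$ by something central, so $c_i(g(x_1),\dots,g(x_n)) = \hat c_i$ by the same commutator-invariance, and the relation $x_i c_i = 1$ in $E$ then forces $g(x_i) = \hat c_i^{-1} = \tilde x_i$.

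The crux of the argument is the commutator-invariance step: the specific corrected lift $\tilde x_i := \hat c_i^{-1}$ realises all the relations $x_i c_i = 1$ in $E'$ precisely because $c_i \in [F,F]$, and this same hypothesis powers the uniqueness argument. An alternative route would be to compute $H_2(E,\mathbb{Z})$ by the Hopf formula and show it vanishes, and then deduce $E \cong \widetilde{Q}$ from Proposition \ref{p: Properties of UCEs} together with perfectness of $E$; but verifying the vanishing of $H_2(E)$ directly appears to be no easier than the universal-property calculation sketched above.
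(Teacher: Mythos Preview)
The paper does not prove this proposition; it is quoted as Corollary~3.6 of \cite{bridson_decision_2010}. Your argument is correct and self-contained. You show that $E\to Q$ is a central extension with $E$ perfect, and then verify the universal property directly; the crucial observation---that evaluating a word $w\in[F,F]$ at a tuple is unchanged under central perturbation of the arguments, because each exponent sum vanishes---is precisely what makes both the construction of the corrected lift $\tilde x_i=\hat c_i^{-1}$ and the uniqueness argument work. One very minor stylistic point: the perfectness of $E$ is not logically needed once you have verified existence \emph{and} uniqueness of the map to every central extension (that already characterises $E$ as the universal central extension), though it is of course reassuring and costs nothing to note.
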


A naive search identifies suitable choices for the $c_i$, and hence the process of
passing from a finite presentation of a perfect group to a finite presentation of its universal
central extension is entirely algorithmic. 
Also, the image of $\{r_1,\ldots,r_m\}$ in $\widetilde{Q}$ generates $H_2(Q,\Z)\subseteq \widetilde{Q}$. 

\begin{addendum}\label{a:H_2} There is an algorithm that, given a finite
presentation of a perfect group $Q$, will construct a finite presentation for the
universal central extension $\widetilde Q$ together with a finite
generating set for $H_2(Q,\Z)\subseteq \widetilde{Q}$,
given as words in the generators of $\tilde Q$.
\end{addendum}

\subsection{The Rips Construction}\label{ss:rips} {\mb{The Rips  construction \cite{rips_subgroups_1982} is a remarkable tool for constructing hyperbolic groups with pathological behaviour.  It is extremely flexible and as a result one can insist that the hyperbolic
groups constructed enjoy various additional properties.   The following residually finite version is due to Wise.

\begin{theorem}[Wise \cite{wise_residually_2003}]\label{t: Rips}
There is an algorithm that, given a finitely presented group $Q=\langle X\mid R\rangle$, will construct a finite presentation $\< a_1,a_2,a_3,X\mid \Sigma\>$ of a residually finite hyperbolic group $\Gamma$ of cohomological dimension 2 and a short exact sequence
\[
1\to K\to\Gamma\overset{\eta}\to Q\to 1
\]
where $K\subseteq \G$ is the subgroup generated by $\{a_1,a_2,a_3\}$ and $\eta:\G\to Q$ is defined by the identity map on $X$. Moreover $|\Sigma|= |R| + 6|X|$.
\end{theorem}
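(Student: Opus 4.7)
The plan is to follow the scheme of Rips's original construction \cite{rips_subgroups_1982}, then refine the choice of the filling words following Wise to force residual finiteness. Given $Q = \langle X \mid R\rangle$, introduce three new letters $a_1, a_2, a_3$. For each $r \in R$, write down a relator of the form $r \cdot U_r$, where $U_r$ is a word in $\{a_1^{\pm 1}, a_2^{\pm 1}, a_3^{\pm 1}\}$ still to be specified. For each $x \in X$ and each $i \in \{1,2,3\}$, add two ``conjugation'' relators $x^{-1} a_i x \cdot (V_{x,i}^+)^{-1}$ and $x\, a_i\, x^{-1}\cdot (V_{x,i}^-)^{-1}$, where $V_{x,i}^{\pm}$ are also words in the $a_j^{\pm 1}$. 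This yields a presentation with exactly $|R|+6|X|$ relators, which I take as $\Sigma$. By construction, $K := \langle a_1, a_2, a_3\rangle$ is normal in $\Gamma := \langle a_1,a_2,a_3,X \mid \Sigma\rangle$, and quotienting by $K$ recovers the presentation $\langle X \mid R\rangle$ of $Q$; this is the required short exact sequence with $\eta$ the identity on $X$.

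The first technical step is to choose the $U_r$ and $V_{x,i}^{\pm}$ so that $\Sigma$ satisfies the $C'(1/6)$ small cancellation condition. The standard way is to take the filling words as sufficiently long, pairwise non-overlapping syllables of a fixed aperiodic infinite word over $\{a_1, a_2, a_3\}$, with lengths chosen to dominate those of the relators of $Q$, and with the further precaution that no relator is a proper power. Once $C'(1/6)$ holds, classical small cancellation theory yields that $\Gamma$ is torsion-free and hyperbolic and that the presentation is aspherical, so the presentation 2-complex is a $K(\Gamma,1)$ of dimension $2$; whence $\Gamma$ has cohomological dimension $2$.

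The main obstacle, and Wise's contribution, is to also force residual finiteness: $C'(1/6)$ alone does not suffice. The idea is to refine the filling words so that the presentation carries extra geometric structure from which residual finiteness can be read off. Concretely, one arranges the $U_r, V_{x,i}^{\pm}$ so that $\Gamma$ is a quasiconvex subgroup of a hyperbolic group acting properly and cocompactly on a $\mathrm{CAT}(0)$ cube complex in a (virtually) special manner; residual finiteness of $\Gamma$ then follows from the separability of quasiconvex subgroups in virtually special hyperbolic groups. This refinement can be carried out without disturbing the $C'(1/6)$ condition, the cohomological dimension, the subgroup generated by $\{a_1,a_2,a_3\}$, the identification $\Gamma/K \cong Q$, or the count $|\Sigma| = |R| + 6|X|$. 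Since the recipe for producing $\Sigma$ from $(X,R)$ is completely explicit and uniform in $(X, R)$, it describes an algorithm as claimed.
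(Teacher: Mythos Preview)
The paper does not give its own proof of this theorem; it is quoted from Wise \cite{wise_residually_2003} and used as a black box. So there is no ``paper's proof'' to compare against, only Wise's original argument. Your outline of the Rips part is fine: the shape of the presentation, the normality of $K$, the count $|\Sigma|=|R|+6|X|$, and the $C'(1/6)$ choice of filling words giving hyperbolicity, asphericity, and cohomological dimension~$2$ are all exactly as in Rips's original paper.

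The gap is in the residual finiteness step. You propose to arrange that $\Gamma$ sit quasiconvexly inside a virtually special hyperbolic group and then invoke separability. But this is the content of Haglund--Wise \cite{haglund_special_2008}, which the present paper records separately as Theorem~\ref{t:HW}, and the paper explicitly remarks that in that version ``we have lost control over the rank of $K$''. In other words, the virtually special refinement does \emph{not} obviously preserve the three-generator, $|R|+6|X|$-relator format you need here; you have conflated two distinct theorems that the paper deliberately keeps apart. Wise's 2003 argument predates the special cube complex machinery: he chooses the filling words so that the presentation $2$-complex is a non-positively curved $VH$ (vertical/horizontal) square complex satisfying his ``clean'' condition, and then appeals to his earlier subgroup separability results for such complexes to deduce residual finiteness. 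That is the mechanism you should cite, not virtual specialness.
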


The following `virtually special' version is due to Haglund and Wise.
Note that here we have lost control over the rank of $K$.

\begin{theorem}[Haglund--Wise \cite{haglund_special_2008}]\label{t:HW}
There is an algorithm that, given a finitely presented group $Q=\<X\mid R\>$ will construct a finite presentation $\< A\cup X\mid \Sigma\>$ of a virtually special, ${\rm{CAT}}(0)$, hyperbolic group $\Gamma$ and a short exact sequence
\[
1\to K\to\Gamma\overset{\eta}\to Q\to 1
\]
where $K\subseteq\G$ is the group generated by $A$ and $\eta:\G\to Q$ is defined by the identity map on $X$.
\end{theorem}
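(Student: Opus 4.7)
The plan is to upgrade a Rips-style construction so that the output is not merely hyperbolic but also acts properly and cocompactly on a ${\rm CAT}(0)$ cube complex; virtual specialness then follows from the machinery of \cite{haglund_special_2008}. I would proceed in three stages: produce a hyperbolic quotient of the required shape, cubulate it, and then invoke the special cube complex theory.

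For the first stage, given $Q=\<X\mid R\>$, introduce a finite set $A$ of auxiliary generators (allowing $|A|$ to depend on the input, since the uniform bound $|\Sigma|=|R|+6|X|$ appearing in Theorem \ref{t: Rips} is explicitly dropped here) and build $\Sigma$ from two families of relators: for each $r\in R$ a word of the form $r\cdot w_r(A)$, and for each pair $(x,a)\in X\times A^{\pm}$ a word of the form $xax^{-1}\cdot u_{x,a}(A)^{-1}$. Select the padding words $w_r, u_{x,a}\in A^{\pm *}$ by a naive search so that the resulting set of cyclic relators satisfies a metric small-cancellation condition such as $C'(1/6)$. Then $\G=\<A\cup X\mid\Sigma\>$ is word-hyperbolic, $K:=\<A\>$ is a normal subgroup by design, and the identity on $X$ induces the required short exact sequence $1\to K\to \G\overset{\eta}\to Q\to 1$.

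The second and third stages are largely off-the-shelf. Wise's cubulation of $C'(1/6)$ small-cancellation groups uses antipodal pairs on relator polygons to define a wall structure on the Cayley $2$-complex, and Sageev's dual construction then furnishes a ${\rm CAT}(0)$ cube complex on which $\G$ acts properly and cocompactly, making $\G$ a ${\rm CAT}(0)$ group. Since $\G$ is already hyperbolic, the Haglund--Wise theorem that cocompactly cubulated hyperbolic groups are virtually special then applies directly. Every step of the pipeline is effective---the search for suitable padding, the verification of $C'(1/6)$, and the combinatorial description of the cubulation all terminate in bounded time---so the overall procedure is algorithmic.

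The principal obstacle is arranging the small-cancellation condition, and this is what forces one to give up the uniform bound on $|A|$ present in Theorem \ref{t: Rips}. The padding words must be long enough to eliminate all long common subwords among the relators, so their lengths, and hence $|A|$ and $|\Sigma|$, must be allowed to grow with $|X|$ and $|R|$. Verifying $C'(1/6)$ is nonetheless a finite combinatorial check on the set of cyclic relators, so the required search is guaranteed to succeed and the construction remains algorithmic.
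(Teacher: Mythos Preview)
The paper does not supply its own proof of this theorem; it is quoted from \cite{haglund_special_2008} as a black box, so there is no in-paper argument to compare against beyond the attribution.

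That said, your sketch contains a genuine gap. The assertion you invoke in the third stage---that every hyperbolic group acting properly and cocompactly on a ${\rm CAT}(0)$ cube complex is virtually special---is \emph{not} a theorem of Haglund and Wise. It is Agol's theorem, proved several years after \cite{haglund_special_2008} (and indeed after the present paper was written). The paper \cite{haglund_special_2008} establishes virtual specialness only for restricted classes of cubulated groups, by verifying the hyperplane conditions directly in specific constructions; the general hyperbolic case was precisely the central conjecture that Agol later resolved. So your pipeline, as written, cannot be what \cite{haglund_special_2008} actually does, and the step on which everything rests is misattributed and was unavailable at the time.

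A secondary point: your explanation of why the bound on $|A|$ is lost is not quite right. In a $C'(1/6)$ Rips construction one may keep $|A|$ fixed (two generators suffice) and absorb all the growth into the \emph{lengths} of the padding words; long words in a two-letter alphabet are enough to kill common pieces. The loss of control over the rank of $K$ that the paper remarks upon reflects the particulars of the actual Haglund--Wise construction---where $\Gamma$ is built so that the hyperplane pathologies of the resulting cube complex can be ruled out by hand, at the cost of extra generators---rather than any intrinsic obstruction in the small-cancellation step you describe.
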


{\mb{Virtually special groups enjoy many useful properties.
For example, because virtually special groups are linear over $\Z$
(see   Section \ref{s: Matrix groups}), Theorem \ref{t:HW}
and the Main Theorem of Bridson and Miller \cite{BMiller-decide} 
together imply:


\begin{theorem} If $n$ is sufficiently large, then the isomorphism problem 
for
finitely presented subgroups of 
$\SL(n,\Z)$
is unsolvable.
\end{theorem}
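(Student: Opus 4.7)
The plan is to combine the Haglund--Wise version of the Rips construction (Theorem \ref{t:HW}) with the unsolvability of the isomorphism problem for finitely presented subgroups of a direct product of two hyperbolic groups, as established in \cite{BMiller-decide}, and then to transport the resulting pathology into $\SL(n,\Z)$ using the $\Z$-linearity of virtually special groups (as discussed in Section \ref{s: Matrix groups}).

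First I would recall the shape of the Main Theorem of Bridson--Miller \cite{BMiller-decide}: starting from a recursive family of finitely presented groups $Q_n$ whose triviality (or some similar invariant) is recursively enumerable but not recursive, the Rips machinery produces a single hyperbolic group $\G$ together with two recursive sequences of finite subsets $S_n,T_n\subseteq \G\times\G$ generating finitely presented subgroups $\H_n,\H_n'$ whose isomorphism type encodes whether $Q_n$ is trivial. Re-running that argument with Theorem \ref{t:HW} in place of the ordinary Rips construction, I would upgrade $\G$ to a hyperbolic, ${\rm{CAT}}(0)$, virtually special group while preserving the short exact sequence $1\to K\to \G\to Q_n \to 1$ (with $K$ finitely generated) on which the Bridson--Miller machinery depends. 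The output is a virtually special hyperbolic $\G$ for which the isomorphism problem for finitely presented subgroups of $\G\times\G$ is unsolvable.

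Next I would invoke the fact that any virtually special group is $\Z$-linear, and moreover that a faithful representation $\rho:\G\hookrightarrow \SL(m,\Z)$ can be produced algorithmically from the output of Theorem \ref{t:HW}; this is exactly the content we will develop in Section \ref{s: Matrix groups} for the proof of Corollary \ref{c: Matrices}. A block-diagonal embedding gives an explicit faithful representation $\rho\times\rho:\G\times\G\hookrightarrow \SL(2m,\Z)$. Setting $n=2m$, any finite subset $S\subseteq \G\times\G$ is converted effectively into a finite subset of $\SL(n,\Z)$, and the image subgroup is isomorphic (hence finitely presentable) if and only if the original is.

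Applying $\rho\times\rho$ to the Bridson--Miller generators $S_n,T_n$ therefore produces a recursive sequence of pairs of finite subsets of $\SL(n,\Z)$ generating finitely presented subgroups, for which isomorphism is undecidable. The main obstacle I anticipate is purely bookkeeping: verifying that the Bridson--Miller construction uses only the algorithmic content of Rips (finite presentation of $\G$, computable generators for the kernel $K$, control of the fibre product via the 1-2-3 Theorem) and nothing specific to the original, non-special Rips groups, so that Theorem \ref{t:HW} slots in cleanly; and checking the effectiveness of the $\Z$-linear representation of a virtually special group from its presentation, which is the technical heart of Section \ref{s: Matrix groups}.
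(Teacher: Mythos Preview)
Your proposal is correct and follows essentially the same approach as the paper: the paper's entire argument is the one-line observation that Theorem~\ref{t:HW} (Haglund--Wise Rips) combined with the Main Theorem of \cite{BMiller-decide} and the $\Z$-linearity of virtually special groups yields the result, and you have spelled out precisely this combination. One small point: for the bare unsolvability statement in a fixed $\SL(n,\Z)$ you do not actually need the \emph{effective} construction of the representation from Section~\ref{s: Matrix groups}---mere existence of a faithful $\rho:\Gamma\hookrightarrow\SL(m,\Z)$ suffices, since the Bridson--Miller construction applies Rips once to a single input group and hence yields a single virtually special hyperbolic $\Gamma$; your notation ``$1\to K\to\Gamma\to Q_n\to 1$'' with a fixed $\Gamma$ and varying $Q_n$ is slightly garbled on this point, but the intent is clear.
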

}}

\subsection{Fibre products}\label{ss:fib}

Given a short exact sequence of groups
\[
1\to K\to\Gamma\stackrel{\eta}{\to} Q\to 1
\]
the corresponding \emph{fibre product} is the subgroup $\H$ of $\Gamma\times\Gamma$ defined by
\[
\H=\{(\gamma_1,\gamma_2)\in\Gamma\times\Gamma\mid\eta(\gamma_1)=\eta(\gamma_2)\}.
\]
Projection onto the second factor gives an epimorphism $\H\to\Gamma$ with kernel $K\times 1$. The map that identifies $\Gamma$ with the diagonal subgroup of $\Gamma\times\Gamma$  splits this projection and therefore the fibre product can be expressed as a semidirect product
\begin{equation}\label{eq: Semidirect product}
\H\cong K\rtimes\Gamma,
\end{equation}
where the action of $\Gamma$ is the conjugation action in the original short exact sequence.

Recall that a group is \emph{of type $\F_n$} if it admits an Eilenberg--Mac~Lane space with finite $n$ skeleton.  
It is a simple exercise to prove that if $\G$ is finitely generated (i.e. of type $\F_1$) and $Q$ is finitely presentable (i.e. of type $\F_2$) then $\H$ is finitely generated. This easy `0-1-2 Lemma' has a more sophisticated analogue due to Baumslag, Bridson, Miller and Short \cite{baumslag_fibre_2000}.

\begin{theorem}[The 1-2-3 Theorem \cite{baumslag_fibre_2000}]
Let
\[
1\to K\to\Gamma\to Q\to 1
\]
be a short exact sequence of groups.  If $K$ is finitely generated
(type $\F_1$), $\Gamma$ is finitely presented (type $\F_2$), and $Q$ is of type $\F_3$, then the corresponding fibre product $\H\subseteq\Gamma\times\Gamma$ is finitely presentable.
\end{theorem}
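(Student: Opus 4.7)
The plan is to construct an explicit finite presentation of $\H$ using its semidirect product decomposition $\H \cong K \rtimes \G$. Fix a finite presentation $\G = \<X \mid R\>$ and a finite generating set $A = \{a_1,\dots,a_n\}$ for $K$. Since $K$ is normal in $\G$, for each $x \in X$ and each $a_i$ the element $x a_i x^{-1}$ lies in $K$, and a naive search in the free group $F(A)$ produces a word $w_{i,x}(A)$ equal to $x a_i x^{-1}$ in $\G$. On the generating set $X \cup A$, three natural families of relations for $\H$ present themselves: (i)~the defining relations $R$ of $\G$; (ii)~the conjugation relations $x a_i x^{-1} = w_{i,x}(A)$; and (iii)~relations satisfied by the $a_i$ in $K$. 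Families (i) and (ii) are finite; the content of the theorem is that a finite subfamily of (iii) also suffices.

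To phrase this precisely, set $F = F(A)$, equip it with the $\G$-action $x \cdot a_i := w_{i,x}(A)$, and let $N \subseteq F$ be the kernel of the natural surjection $F \to K$. Because the conjugation relations (ii) are already imposed, adding a single relation $w(A) \in N$ to the partial presentation automatically forces its entire $\G$-orbit to hold. Thus the proof reduces to showing that $N$ is finitely generated as a $\G$-invariant normal subgroup of $F$.

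I would carry this out topologically. Since $Q$ is of type $\F_2$, one can enlarge $R$ by a finite set $S \subset F(X)$ so that $Q = \<X \mid R \cup S\>$; each $s \in S$ then represents an element of $K$ and can be rewritten as some word $\sigma_s(A)\in F$. Let $Y$ be the presentation $2$-complex of $\<X \mid R \cup S\>$. The hypothesis that $Q$ is of type $\F_3$ is equivalent to $\pi_2(Y)$ being finitely generated as a $\Z Q$-module. A $\Z Q$-module generator of $\pi_2(Y)$ is represented by a spherical diagram over $\<X\mid R\cup S\>$, namely a product of $\G$-conjugates of letters from $S^\pm$ which, after elimination of $R$, is trivial in $\G$. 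Applying the substitution $s \mapsto \sigma_s$ to such a diagram yields an element of $N$, and the finitely many elements so obtained $\G$-generate all of $N$; together with (i) and (ii) they give a finite presentation of $\H$.

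The main difficulty lies in this last step, genuinely converting the $\F_3$ hypothesis into finite $\G$-generation of $N$. This requires careful bookkeeping: verifying that the $\G$-action $x \cdot a_i = w_{i,x}(A)$ is well defined on $F$ modulo $N$ independent of the choice of $w_{i,x}$, that spherical diagrams over $\<X \mid R \cup S\>$ account for all $\G$-orbits of elements of $N$, and that Peiffer interactions between the relations in $R$ and those in $S$ introduce no further elements of $N$. This is precisely the bookkeeping carried out in the proof of the 1-2-3 Theorem in~\cite{baumslag_fibre_2000}.
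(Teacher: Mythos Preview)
The paper does not prove this theorem: it is quoted from \cite{baumslag_fibre_2000} as a black box, and the sentence immediately following it reads ``The details of how one proves this theorem need not concern us here.'' There is therefore nothing to compare your proposal against in this paper.

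Your sketch is a fair outline of the argument in \cite{baumslag_fibre_2000}, as you yourself note in your final sentence. Two small remarks. First, the phrase ``a naive search in the free group $F(A)$ produces a word $w_{i,x}(A)$'' suggests an algorithm, but the statement is purely existential (finite \emph{presentability}); since no solvability of the word problem in $\G$ is assumed, you should simply say that such words exist. Second, the rule $x\cdot a_i := w_{i,x}(A)$ does not literally define a $\G$-action on $F$ (only on $F/N$), so ``equip $F$ with the $\G$-action'' is not quite accurate; you flag this issue later, but it is worth phrasing correctly from the start.
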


The details of how one proves this theorem need not concern us here, but we do need the
following observation.

\begin{lemma}\label{l:size}
Regardless of whether $Q$ is of type $\F_3$, if $A$ generates $K$ and $X\cup A$ generates $\Gamma$, then $\H$ is generated by $\{(a,1),\, (1,a)\mid a\in A\}\cup \{(x,x)\mid x\in X\}$.
\end{lemma}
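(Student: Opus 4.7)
The plan is to verify that an arbitrary element $(\gamma_1,\gamma_2)\in\H$ lies in the subgroup generated by the proposed set, using the semidirect-product decomposition (\ref{eq: Semidirect product}). The starting observation is that, since $\eta(\gamma_1)=\eta(\gamma_2)$, the element $\gamma_1\gamma_2^{-1}$ belongs to $K$, and so one has the factorisation
\[
(\gamma_1,\gamma_2) \;=\; (\gamma_1\gamma_2^{-1},\,1)\cdot(\gamma_2,\gamma_2)
\]
that writes $(\gamma_1,\gamma_2)$ as the product of an element of $K\times 1$ and an element of the diagonal copy of $\Gamma$. It therefore suffices to show that both factors can be expressed as words in the proposed generating set.

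For the diagonal factor, I would express $\gamma_2$ as a word $w$ in the alphabet $(X\cup A)^{\pm}$, which is possible by hypothesis. Then $(\gamma_2,\gamma_2)$ is the image of $w$ under the diagonal homomorphism $\Gamma\hookrightarrow\Gamma\times\Gamma$, hence equals the same word $w$ evaluated on $\{(x,x)\mid x\in X\}\cup\{(a,a)\mid a\in A\}$. The identity $(a,a)=(a,1)(1,a)$ then rewrites each diagonal generator coming from $A$ as a product of two elements of the proposed generating set, so $(\gamma_2,\gamma_2)$ lies in the subgroup they generate.

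For the $K$-factor $(\gamma_1\gamma_2^{-1},1)$, use the hypothesis that $A$ generates $K$ to express $\gamma_1\gamma_2^{-1}$ as a word $u$ in $A^{\pm}$. Then $(\gamma_1\gamma_2^{-1},1)$ is the value of the same word $u$ on the generators $\{(a,1)\mid a\in A\}$, and so again lies in the claimed subgroup. Combining the two reductions shows $(\gamma_1,\gamma_2)$ belongs to the subgroup generated by $\{(a,1),(1,a)\mid a\in A\}\cup\{(x,x)\mid x\in X\}$, completing the argument.

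There is no real obstacle here; the proof is essentially bookkeeping once one remembers the splitting $\H\cong K\rtimes\Gamma$ and the trivial identity $(a,a)=(a,1)(1,a)$. One might worry that the diagonal factor $(\gamma_2,\gamma_2)$ involves letters from $A$, but this is harmless precisely because our generating set contains both $(a,1)$ and $(1,a)$, whose product is the diagonal generator.
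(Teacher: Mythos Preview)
Your argument is correct. The paper states this lemma without proof, treating it as an immediate consequence of the semidirect-product decomposition $\H\cong K\rtimes\Gamma$ recorded just before the 1-2-3 Theorem; your write-up simply spells out the routine bookkeeping that the authors leave to the reader.
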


\begin{remark}
There is an `Effective 1-2-3 Theorem' (Theorem 2.2 of \cite{bridson_finitely_2008}), which gives an algorithm that will compute a finite presentation for $\H$. This algorithm
takes as input finite presentations for $\Gamma$ and $Q$ and a finite set of $\Z Q$-module generators for the second homotopy group of a presentation complex for $Q$.  It follows from our proof of 
Theorem \ref{t:main} that the input for the Effective 1-2-3 Theorem cannot be reduced to the finite presentations for $\Gamma$ and $Q$ and the abstract knowledge that $Q$ is of type $\F_3$.
\end{remark}

\section{The rational homology of fibre products}\label{s:homology}

{\mb{This section is based on Section 3 of \cite{BMiller10}.}}
All homology groups in this section, unless otherwise stated, are with trivial coefficient module $\mathbb Q$ (which is omitted from the notation). The $i$th Betti number of a group $G$, denoted $b_i(G)$, is the dimension of $H_i(G)$ as a vector space over $\mathbb{Q}$.  Let
\[
1\to K\to \G\stackrel{\eta}{\to} Q\to 1
\]
be a short exact sequence of groups, defining a fibre product $\H\subseteq\G\times\G$ as above.  We will relate $b_1(\H)$ to  $b_2(Q)$ and $b_1(\Gamma)$, proceeding under the following assumptions:
\begin{enumerate}
\item[(i)] $b_1(Q)=0$;
\item[(ii)] the map $H_2(\Gamma)\to H_2(Q)$ induced by $\eta$ is zero.
\end{enumerate}

The short exact sequence gives rise to the following five-term exact sequence in homology \cite{brown_cohomology_1994}:
\[
H_2(\G) \stackrel{\eta_*}{\to} H_2(Q)\to H_0(Q,H_1(K))\to H_1(\G) \stackrel{\eta_*}{\to} H_1(Q)\to 0.
\]

The first arrow is induced by $\eta$, so under our hypotheses we obtain a short exact sequence of rational vector spaces and deduce that
\begin{equation}\label{eq: Betti number}
b_2(Q)+b_1(\G)=\dim_\mathbb{Q} H_0(Q,H_1(K)).
\end{equation}

\begin{lemma}
If $\H\subseteq\G\times\G$ is the fibre product associated to the short exact sequence
\[
1\to K\to\Gamma\to Q\to 1
\]
then $H_1(\H) \cong H_0(Q,H_1(K))\oplus H_1(\G)$.
\end{lemma}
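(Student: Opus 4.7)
The plan is to apply the five-term exact sequence in homology to the semidirect product decomposition $\H \cong K\rtimes\Gamma$ from equation~(\ref{eq: Semidirect product}). This decomposition gives a short exact sequence
\[
1\to K\to\H\to\Gamma\to 1,
\]
and, crucially, the diagonal embedding $\Gamma\hookrightarrow\Gamma\times\Gamma$ lies in $\H$ and furnishes a set-theoretic and group-theoretic section $s:\Gamma\to\H$ of the projection $p:\H\to\Gamma$.

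The corresponding five-term exact sequence in rational homology is
\[
H_2(\H)\xrightarrow{p_*} H_2(\Gamma)\xrightarrow{d} H_0(\Gamma,H_1(K))\to H_1(\H)\xrightarrow{p_*}H_1(\Gamma)\to 0.
\]
Because $p\circ s=\mathrm{id}_\Gamma$, the induced maps $p_*$ are split surjective in every degree. In particular $p_*:H_2(\H)\to H_2(\Gamma)$ is surjective, so exactness forces the connecting homomorphism $d:H_2(\Gamma)\to H_0(\Gamma,H_1(K))$ to vanish. We are left with a short exact sequence
\[
0\to H_0(\Gamma,H_1(K))\to H_1(\H)\to H_1(\Gamma)\to 0,
\]
which is split by $s_*$ on the right; hence $H_1(\H)\cong H_0(\Gamma,H_1(K))\oplus H_1(\Gamma)$.

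It remains to identify $H_0(\Gamma,H_1(K))$ with $H_0(Q,H_1(K))$. The $\Gamma$-module structure on $H_1(K)$ is induced by the conjugation action of $\Gamma$ on $K$. Since inner automorphisms of $K$ act trivially on $H_*(K)$, the subgroup $K\subseteq\Gamma$ acts trivially on $H_1(K)$, so the action factors through the quotient $Q=\Gamma/K$. Consequently the $\Gamma$-coinvariants and the $Q$-coinvariants of $H_1(K)$ coincide, giving the claimed isomorphism. There is no real obstacle here: the statement is essentially a packaging of the Lyndon--Hochschild--Serre five-term sequence for a split extension, together with the standard fact that inner automorphisms act trivially on group homology. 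Note that assumptions (i) and (ii) preceding the lemma were not required for this computation; they will only be needed to plug the result into~(\ref{eq: Betti number}).
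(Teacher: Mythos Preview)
Your proof is correct and rests on the same two ingredients as the paper's: the split extension $\H\cong K\rtimes\G$ from equation~(\ref{eq: Semidirect product}) and the observation that the $\G$-action on $H_1(K)$ factors through $Q$. The only difference is packaging: the paper computes the abelianisation of the semidirect product directly (the abelianisation of $K\rtimes\G$ is $H_1(\G)$ plus the $\G$-coinvariants of $H_1(K)$), whereas you route the same computation through the five-term sequence and use the section to kill the connecting map; the content is identical.
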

\begin{proof}
The group $\G$ acts on $K$ by conjugation, inducing an action of $Q=\G/K$ on $H_1(K)$. By definition, $H_0(Q,H_1(K))$ is the quotient of $H_1(K)$ by this action. 
As in equation (\ref{eq: Semidirect product}),  we have $\H\cong K\rtimes\G$. Thus $H_1(\H)$ is the sum of $H_1(\G)$, the abelianisation of $\G$, and the quotient of $H_1(K)$ obtained by trivialising the $\G$ action, that is $H_0(Q,H_1(K))$.  Thus the decomposition of $H_1(\H)$ is an immediate consequence of equation (\ref{eq: Semidirect product}).\end{proof}

Combining this lemma with equation (\ref{eq: Betti number}), we have proved the following formula.

\begin{prop}\label{p: Rational homology formula}
Suppose $1\to K\to\G\to Q\to 1$ is a short exact sequence of groups, where $Q$ is perfect and the induced map $H_2(\G)\to H_2(Q)$ is zero. Then for the corresponding fibre product $\H$ we have
\[
b_1(\H)=2b_1(\G)+b_2(Q).
\]
\end{prop}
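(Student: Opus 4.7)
The plan is to assemble the formula directly from the two ingredients already in hand: the decomposition of $H_1(\H)$ from the lemma just proved, and the dimension identity encapsulated in equation (\ref{eq: Betti number}). The key observation needed to deploy the material of the section is that the proposition's hypothesis ``$Q$ is perfect'' supplies condition (i) of the running assumptions, namely $b_1(Q)=0$: indeed $Q$ perfect means $H_1(Q,\Z)=0$, whence $H_1(Q;\Q)=0$ as well. Condition (ii), the vanishing of $\eta_*:H_2(\G)\to H_2(Q)$, is assumed directly.

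With both running assumptions in force, I would invoke the five-term exact sequence in rational homology for $1\to K\to\G\to Q\to 1$. After striking out $H_1(Q)=0$ and using $\eta_*=0$ on $H_2$, it collapses to the short exact sequence
\[
0\to H_2(Q)\to H_0(Q,H_1(K))\to H_1(\G)\to 0
\]
of $\Q$-vector spaces, giving the identity $\dim_\Q H_0(Q,H_1(K))=b_2(Q)+b_1(\G)$ recorded as equation (\ref{eq: Betti number}).

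Finally I would combine this with the preceding lemma, which asserts $H_1(\H)\cong H_0(Q,H_1(K))\oplus H_1(\G)$. Taking $\Q$-dimensions and substituting,
\[
b_1(\H)=\dim_\Q H_0(Q,H_1(K))+b_1(\G)=\bigl(b_2(Q)+b_1(\G)\bigr)+b_1(\G)=2b_1(\G)+b_2(Q),
\]
as required.

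There is really no obstacle here: the substantive content resides in the five-term sequence calculation and in the semidirect product decomposition $\H\cong K\rtimes\G$ used in the lemma. The proposition is best viewed as a convenient repackaging of those two facts, with the perfectness of $Q$ playing precisely the role of eliminating the $b_1(Q)$ term that would otherwise appear on the right-hand side.
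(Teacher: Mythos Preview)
Your proposal is correct and follows exactly the paper's approach: the paper states the proposition immediately after the lemma, prefacing it with ``Combining this lemma with equation (\ref{eq: Betti number}), we have proved the following formula,'' which is precisely the combination you carry out. Your additional remark that perfectness of $Q$ supplies running assumption (i) is implicit in the paper's setup and makes the deduction fully explicit.
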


Thus if one is in a situation where one can calculate $b_1(\G)$ but not $b_2(Q)$, then one cannot calculate  $b_1(\H)$.

\section{Proofs} \label{s: Proofs}

We shall deduce Theorem \ref{t:main} from the following proposition, the proof of which is a straightforward concatenation of the results of the previous sections.

\begin{prop}\label{p: Technical proposition}
There exists an algorithm that takes as input a finite presentation $\langle X\mid R\rangle$ for a perfect group $Q$ and outputs a finite presentation $\langle Y\mid T\rangle$ for a residually finite hyperbolic group $\Gamma$ as well as a finite set $S\subseteq Y^{\pm*}\times Y^{\pm*}$ with the property that the subgroup $\H$ of $\Gamma\times\Gamma$ generated by $S$ satisfies
\[
b_1(\H)=2b_1(\Gamma)+b_2(Q).
\]
Moreover, $|Y|$, $|S|$ and $|T|$ depend only on $|X|$ and $|R|$.
\end{prop}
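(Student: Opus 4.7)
The plan is to pass from the given perfect group $Q$ to its universal central extension $\widetilde{Q}$, apply the residually finite Rips construction to $\widetilde{Q}$, and then form the fibre product associated to the composite map onto $Q$. The reason for routing through $\widetilde{Q}$, rather than applying Rips to $Q$ directly, is precisely to arrange the hypothesis of Proposition \ref{p: Rational homology formula}: since $\widetilde{Q}$ is super-perfect, the induced map $H_2(\Gamma)\to H_2(Q)$ will automatically factor through $H_2(\widetilde{Q})=0$.

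In detail, one begins with the given input $\langle X\mid R\rangle$ for $Q$.  Addendum \ref{a:H_2}, together with Proposition \ref{p: Presentation for universal central extension}, algorithmically produces a finite presentation of $\widetilde{Q}$ on the same generating set $X$ and exhibits a finite generating set for the central subgroup $H_2(Q,\Z)\subseteq \widetilde{Q}$ in the form of the (images of the) defining relators $r_1,\dots,r_m$ of $Q$, viewed as words in $X^{\pm *}$.  Feeding this presentation of $\widetilde{Q}$ into Wise's Rips construction (Theorem \ref{t: Rips}) yields a finite presentation $\langle Y\mid T\rangle$ with $Y=\{a_1,a_2,a_3\}\cup X$ of a residually finite hyperbolic group $\Gamma$ of cohomological dimension two, together with a short exact sequence
\[
1\to K\to\Gamma\stackrel{\eta}\to \widetilde{Q}\to 1
\]
in which $K=\langle a_1,a_2,a_3\rangle$.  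Composing $\eta$ with the quotient $\widetilde{Q}\to Q$ gives a new short exact sequence $1\to K'\to\Gamma\to Q\to 1$; since $K'/K\cong H_2(Q,\Z)$ is generated by the images of the $r_j$, the subgroup $K'$ itself is generated by $A:=\{a_1,a_2,a_3,r_1,\dots,r_m\}$ (with the $r_j$ interpreted as words in $X\subseteq Y$).

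I would then take $\H\subseteq\Gamma\times\Gamma$ to be the fibre product of this new sequence.  By Lemma \ref{l:size}, the finite set
\[
S=\{(a,1),(1,a)\mid a\in A\}\cup\{(x,x)\mid x\in X\}
\]
generates $\H$, and Proposition \ref{p: Rational homology formula} applied to $1\to K'\to\Gamma\to Q\to 1$ then yields the desired formula $b_1(\H)=2b_1(\Gamma)+b_2(Q)$.  The cardinalities $|Y|=|X|+3$, $|S|=2|R|+|X|+6$, and $|T|$ (determined by the universal-central-extension bookkeeping of Proposition \ref{p: Presentation for universal central extension} and the Rips bound $|\Sigma|=|\widetilde{R}|+6|X|$) manifestly depend only on $|X|$ and $|R|$.

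There is no genuine obstacle: every step above is algorithmic by a cited result of Section \ref{s3}.  The only substantive conceptual content is the decision to use $\widetilde{Q}$ as an intermediate quotient, without which the hypotheses of Proposition \ref{p: Rational homology formula} would not be met and $b_2(Q)$ would not appear in the resulting formula.
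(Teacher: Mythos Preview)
Your proposal is correct and follows essentially the same route as the paper: pass to the universal central extension $\widetilde{Q}$, apply Wise's Rips construction to obtain $\Gamma\twoheadrightarrow\widetilde{Q}$, form the fibre product over the composite $\Gamma\to\widetilde{Q}\to Q$, and then invoke Proposition \ref{p: Rational homology formula} (the factoring through $H_2(\widetilde{Q})=0$ being precisely the reason for the detour through $\widetilde{Q}$). The paper's proof differs only in notation---it calls your $K'$ by the name $L$---and in not writing out the explicit cardinality of $S$.
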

\begin{proof}
Given $\<X\mid R\>$ we first construct a finite presentation $\<X\mid \widetilde{R}\>$ for the universal central extension of $Q$, as in Addendum \ref{a:H_2}:
\[
1\to J\to\widetilde{Q}\stackrel{\xi}{\to} Q\to 1
\]
where $J\cong H_2(Q,\Z)$ is generated by the specific set of words $R(X)\subseteq X^{\pm*}$. We then apply Theorem \ref{t: Rips} to $\<X\mid \widetilde{R}\>$ to obtain a  short exact sequence
\[
1\to K\to\Gamma\stackrel{\eta}{\to} \widetilde{Q}\to 1
\]
where $\Gamma=\<Y \mid T\>$ is a hyperbolic group generated by $Y=\{a_1,a_2,a_3\}\cup X$ and $K$ is the subgroup generated by $\{a_1,a_2,a_3\}$.  Let $L=\eta^{-1}(J)$ and note that this is the normal subgroup of $\Gamma$ generated (as a subgroup) by the finite set $\{a_1,a_2,a_3\}\cup R(X)$. We have a short exact sequence
\[
1\to L\to \Gamma\stackrel{\zeta}{\to} Q\to 1  
\]
where $\zeta=\xi\circ\eta$ is induced by the identity map on $X$.  The fibre product $\H\subseteq\G\times\G$ associated to this short exact sequence has a finite generating set $S$ as described in Lemma \ref{l:size}.

Every part of the construction up to this point is explicit: we have  described an algorithm that yields a presentation $\langle Y\mid T\rangle$ for $\Gamma$ and a finite set $S$ of generators for $\H$. Moreover, because of the explicit nature of the presentations in Proposition \ref{p: Properties of UCEs}  and Theorem \ref{t: Rips}, we see that $|Y|$, $|S|$ and $|T|$ depend only on the number of generators and relations in our original presentation of $Q$.

By assumption, $Q$ is perfect.  The map $\zeta$ factors through $\widetilde{Q}$; as $H_2(\widetilde{Q})=0$ it follows that $\zeta$ is zero at the level of second homology.  Therefore, $\zeta$ satisfies the hypotheses of Proposition \ref{p: Rational homology formula} and the result follows.
\end{proof}

To prove Theorem \ref{t:main}, we apply Proposition \ref{p: Technical proposition} to the sequence of presentations produced by the Collins--Miller construction.  Finite presentability comes from the 1-2-3 Theorem.

\begin{proof}[Proof of Theorem \ref{t:main}]
Let $\<X\mid R_n\>$ be the sequence of presentations provided by the Collins--Miller construction, modified as in Addendum \ref{a:CM}.  For each $n$, the algorithm of Proposition \ref{p: Technical proposition} applied to $\langle X\mid R_n\rangle$ yields a residually finite hyperbolic group $\Gamma_n$ with an explicit finite presentation $\<Y_n\mid T_n\>$.  Note $Y_n=\{a_1,a_2,a_3\}\cup X$, which we shall refer to simply as $Y$, for all $n$.  From this presentation we derive a presentation $\<Z\mid \Sigma_n\>$ for $\Gamma_n\times\Gamma_n$ in the obvious manner.  Proposition \ref{p: Technical proposition} also yields a subgroup $\H_n\subseteq\Gamma_n\times\Gamma_n$ given by a finite set of words $S_n\subseteq Z^{\pm*}$.  The cardinalities of $\Sigma_n$ and $S_n$ depend only on $|X|$ and $|R_n|$ and therefore are independent of $n$, and Proposition \ref{p: Technical proposition} tells us that $b_1(\H)=2b_1(\Gamma)+b_2(Q).$ 

The first Betti number of $\Gamma_n$ is readily computable from its presentation, whereas $b_2(Q_n)=0$ if and only if $Q_n$ is the trivial group, by Addendum \ref{a:CM}.  Therefore the set of $n$ such that $b_1(\H_n)=2b_1(\Gamma)$ is not recursive, by part (2) of Theorem \ref{t: Collins--Miller}.

Each of the groups $Q_n = \<X\mid R_n\>$ is either trivial or else has an aspherical presentation. In either case, $Q_n$ is of type $\F_3$. Thus, by the 1-2-3 Theorem, each $\H_n$ is finitely presentable.
\end{proof}

\begin{remark}
The careful reader may be concerned that the sequence of presentations $G_n=\langle Z\mid \Sigma_n\rangle$ that we constructed in the proof of Theorem \ref{t:main} is \emph{a priori} recursively enumerable rather than recursive.  But there is a simple device that overcomes such concerns (cf.~\cite{cfm-thesis}, p.2): one can transform any recursively enumerable sequence of presentations $G_n=\langle Z_n\mid \Sigma_n\rangle$ into a recursive sequence simply by adding an extra generator $\zeta$ to the generating sets $Z_n$ and adding the relations $\zeta^n=1$ and $\zeta=1$ to $\Sigma_n$; this does not alter the isomorphism type of the group $G_n$, but it is easy to see that the new sequence is recursive.
\end{remark}

The proof of Theorem \ref{t:main special} is entirely similar to that of Theorem \ref{t:main}, except that we apply Theorem \ref{t:HW} instead of Theorem \ref{t: Rips} to obtain the following analogue of Proposition \ref{p: Technical proposition}.

\begin{prop}\label{p: Technical proposition special}
There exists an algorithm that takes as input a finite presentation $\langle X\mid R\rangle$ for a perfect group $Q$ and outputs a finite presentation $\langle Y\mid T\rangle$ for a virtually special, CAT(0), hyperbolic group $\Gamma$ as well as a finite set $S\subseteq Y^{\pm*}\times Y^{\pm*}$ with the property that the subgroup $\H$ of $\Gamma\times\Gamma$ generated by $S$ satisfies
\[
b_1(\H)=2b_1(\Gamma)+b_2(Q).
\]
\end{prop}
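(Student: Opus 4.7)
The plan is to carry over the proof of Proposition \ref{p: Technical proposition} essentially verbatim, with the sole substitution of Theorem \ref{t:HW} (Haglund--Wise) in place of Theorem \ref{t: Rips} (the Wise form of the Rips construction). The intermediate ingredients---Addendum \ref{a:H_2} on universal central extensions, the fibre-product construction, Lemma \ref{l:size}, and Proposition \ref{p: Rational homology formula}---are insensitive to which variant of the Rips machine is used downstream.

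Explicitly, given $\langle X\mid R\rangle$ presenting a perfect group $Q$, first invoke Addendum \ref{a:H_2} to construct a finite presentation $\langle X\mid \widetilde R\rangle$ for the universal central extension
\[
1\to J\to \widetilde Q\stackrel{\xi}\to Q\to 1,
\]
together with an explicit finite set $R(X)\subseteq X^{\pm *}$ generating the kernel $J\cong H_2(Q,\Z)$. Now apply Theorem \ref{t:HW} to $\langle X\mid \widetilde R\rangle$ to produce a finite presentation $\langle Y\mid T\rangle$, where $Y=A\cup X$, of a virtually special, CAT(0), hyperbolic group $\Gamma$ fitting into a short exact sequence $1\to K\to\Gamma\stackrel{\eta}\to \widetilde Q\to 1$, in which $K$ is generated by $A$. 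Setting $L:=\eta^{-1}(J)$ yields a short exact sequence
\[
1\to L\to \Gamma\stackrel{\zeta}\to Q\to 1,
\]
in which $L$ is normally generated (in $\Gamma$) by the finite set $A\cup R(X)$. Lemma \ref{l:size} then supplies an explicit finite generating set $S\subseteq Y^{\pm *}\times Y^{\pm *}$ for the fibre product $\H\subseteq \Gamma\times\Gamma$ associated to this sequence.

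To verify the Betti-number formula, note that $\zeta=\xi\circ\eta$ factors through $\widetilde Q$, and $\widetilde Q$ is super-perfect by Proposition \ref{p: Properties of UCEs}, so the induced map $H_2(\Gamma)\to H_2(Q)$ is zero; together with $b_1(Q)=0$ (as $Q$ is perfect), this verifies the hypotheses of Proposition \ref{p: Rational homology formula} for the sequence $1\to L\to\Gamma\to Q\to 1$, giving $b_1(\H)=2b_1(\Gamma)+b_2(Q)$. The one substantive difference from Proposition \ref{p: Technical proposition}---and the reason for omitting its final clause---is that Theorem \ref{t:HW} provides no control over $|A|$ in terms of $|X|$ and $|R|$, so the cardinalities of $Y$, $T$, and $S$ are no longer bounded by $|X|$ and $|R|$ alone. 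Beyond tracking this loss of uniformity, there is no real obstacle: the proof is a mechanical transfer of the previous argument to the virtually special setting.
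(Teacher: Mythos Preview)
Your proof is correct and follows exactly the approach the paper intends: the paper itself does not give a separate argument for this proposition, merely remarking that one substitutes Theorem \ref{t:HW} for Theorem \ref{t: Rips} in the proof of Proposition \ref{p: Technical proposition}. One small imprecision: you say $L$ is \emph{normally} generated by $A\cup R(X)$, but in fact (as in the proof of Proposition \ref{p: Technical proposition}) it is generated \emph{as a subgroup} by this set, which is what Lemma \ref{l:size} requires.
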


Finally, we explain how to deduce Corollary \ref{c: Examples} from Theorem \ref{t:main special}.  In the following proof, we assume that the reader is familiar with the terminology of virtually special, Coxeter and Artin groups, as recalled in the next section.

\begin{proof}[Proof of Corollary \ref{c: Examples}]
The sequence of groups provided by Theorem \ref{t:main special} are all direct products of hyperbolic groups, so part (1) follows.  By construction, the groups provided by Theorem \ref{t:main special} are CAT(0), so part (2) follows.  Virtually special groups are $\Z$-linear by Theorem 1.1  of \cite{haglund_special_2008}; this proves part (3).  By Theorem 4.2 of \cite{haglund_special_2008},  the groups $G_n$ from Theorem \ref{t:main special} are virtually subgroups of right-angled Coxeter groups.  Note that the class of right-angled Coxeter groups is recursively enumerable.  By Proposition \ref{p: Robustness}, if there were a uniform solution to the finite presentation problem in the class of right-angled Coxeter groups then there would be a uniform solution in the class of virtual subgroups of right-angled Coxeter groups, contradicting Theorem \ref{t:main special}.  This proves part (4).  Part (5) is similar: the class of right-angled Artin groups is recursively enumerable, and the groups $G_n$ are virtually subgroups of right-angled Artin groups, again by Theorem 4.2 of \cite{haglund_special_2008}.
\end{proof}

\section{Matrix groups}\label{s: Matrix groups}

In order to deduce Corollary \ref{c: Matrices} from part (3) of Corollary \ref{c: Examples}, we need an algorithm that will construct an explicit faithful representation over $\Z$ of a virtually special group.

Roughly speaking, a \emph{cubical complex} is a CW-complex in which the $k$-cells are Euclidean $k$-cubes attached by local isometries along $(k-1)$-dimensional faces (see page 112 of \cite{bridson_metric_1999}).   We will only be concerned with compact cubical complexes.  Such a complex is \emph{non-positively curved} if it admits a metric that is locally CAT(0).  In \cite{haglund_special_2008}, the subclasses of \emph{special} cube complexes, \emph{A-special} cube complexes and  \emph{C-special} cube complexes were introduced; the reader is referred to that paper for their definitions.  A group is called \emph{special} (or \emph{C-special}, or \emph{A-special}) if it is the fundamental group of the corresponding sort of cubical complex.  As usual, a group $G$ is \emph{virtually special} if it has a finite-index subgroup that is special.  Proposition 3.10 of \cite{haglund_special_2008} shows that a virtually special group is virtually C-special and virtually A-special.     Recall that a \emph{right-angled Artin group} $A_N$, defined by a finite graph $N$, has a presentation in which the generators are the vertices of $N$ and two generators commute if and only if they are joined by an edge in $N$.  A \emph{right-angled Coxeter group} $C_N$ is defined similarly, with the additional stipulation that each generator is an involution.  We are most interested in the following theorem of \cite{haglund_special_2008}.

\begin{theorem}
If $G$ is C-special then $G$ embeds into a right-angled Coxeter group.  If $G$ is A-special then $G$ embeds into a right-angled Artin group.
\end{theorem}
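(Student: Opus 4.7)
The plan is to follow the approach of Haglund and Wise: from a (C- or A-)special cube complex $X$ with $G = \pi_1(X)$, construct a canonical combinatorial map $\phi : X \to R$ where $R$ is the Salvetti complex of a right-angled Artin group (in the A-special case) or its Davis-type analogue for a right-angled Coxeter group (in the C-special case). We then verify that $\phi$ is a local isometry of non-positively curved cube complexes, from which $\pi_1$-injectivity follows by the standard fact that a local isometry between NPC cube complexes lifts to an isometric embedding of universal covers.

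First I would set up the target. Define the graph $N$ whose vertex set is the set of hyperplanes of $X$ (in the A-special case one uses the two-sided orientation to make canonical sense of this) and whose edges record pairs of hyperplanes that cross in $X$. The associated right-angled Artin group $A_N$ (respectively Coxeter group $C_N$) has a compact NPC cubical model $R$, with one vertex, one loop for each generator, and one $k$-cube filled in for every $k$-clique of $N$.

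Next I would define $\phi$ on the cube structure. Every edge of $X$ is dual to a unique hyperplane $H$, so send it to the loop in $R$ labelled by the generator $v_H$; two-sidedness in the A-special case pins down the orientation of the image, while in the C-special case no orientation is needed since $v_H$ is an involution. Any $k$-cube of $X$ is spanned by $k$ pairwise crossing hyperplanes, which by construction span a clique of $N$, so the product map on edges extends uniquely over the cube to the corresponding $k$-cube of $R$.

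The heart of the proof, and the main obstacle, is to verify that $\phi$ is a local isometry: for every vertex $v \in X$, the induced map of links $\mathrm{Lk}(v, X) \to \mathrm{Lk}(\phi(v), R)$ must be an isomorphism onto a full subcomplex. The no-self-intersection and no-self-osculation conditions in the definition of a special cube complex are precisely what is needed for injectivity on the vertex set of $\mathrm{Lk}(v,X)$: distinct edges at $v$ must go to distinct edges at $\phi(v)$, which fails only if two edges at $v$ are dual to the same hyperplane. Fullness is exactly the content of the no-inter-osculation condition: if a simplex in $\mathrm{Lk}(\phi(v),R)$ is spanned by image vertices, the corresponding hyperplanes at $v$ pairwise cross somewhere in $X$, and non-inter-osculation upgrades this to pairwise crossing at $v$, which forces the edges to span a cube of $X$ and hence a simplex of $\mathrm{Lk}(v,X)$. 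With the local isometry in hand, $\widetilde{X}$ embeds isometrically as a convex subcomplex of $\widetilde{R}$, and the induced map $G = \pi_1(X) \hookrightarrow \pi_1(R)$ is the desired embedding into a right-angled Coxeter or Artin group.
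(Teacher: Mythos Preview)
The paper does not prove this theorem at all: it is quoted as a result of Haglund and Wise \cite{haglund_special_2008} (stated immediately after ``We are most interested in the following theorem of \cite{haglund_special_2008}''), and is used in the paper only as a black box en route to the linearity statements in Section~\ref{s: Matrix groups}. So there is no ``paper's own proof'' to compare against.

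That said, your sketch is a faithful outline of the Haglund--Wise argument itself: build the crossing graph $N$ of hyperplanes, map $X$ combinatorially to the Salvetti (or Davis) complex $R$ of $A_N$ (or $C_N$), and verify that the special conditions translate exactly into the link conditions needed for a local isometry, whence $\pi_1$-injectivity. The one place where your wording is slightly loose is the ``fullness'' step: what you actually need is that if two image vertices in $\mathrm{Lk}(\phi(v),R)$ are adjacent then their preimages in $\mathrm{Lk}(v,X)$ are already adjacent (fullness on $1$-simplices suffices since both links are flag). Adjacency in the target link means the two hyperplanes cross somewhere in $X$; since both have edges incident at $v$, the no-inter-osculation axiom forces them to cross \emph{at} $v$, giving the required square and hence the edge in $\mathrm{Lk}(v,X)$. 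With that small clarification your argument is correct and is precisely the standard one.
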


As right-angled Coxeter groups are $\Z$-linear, it follows that a virtually special group is $\Z$-linear.  In this section, we will show how to find an explicit representation over $\Z$ algorithmically.  Our algorithms will work with combinatorial descriptions of cubical complexes. To simplify the exposition, we will not specify a particular way of representing cubical complexes combinatorially, but it is clear that such descriptions exist.  

\begin{lemma}\label{l: Link condition}
There is an algorithm that takes as input a combinatorial description for a finite cubical complex $X$ and determines whether or not $X$ is non-positively curved.
\end{lemma}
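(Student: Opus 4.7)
The plan is to invoke Gromov's link condition, which characterises non-positive curvature of cubical complexes purely combinatorially: a finite cubical complex $X$ is non-positively curved if and only if, for every vertex $v$ of $X$, the link $\mathrm{lk}(v,X)$ is a \emph{flag} simplicial complex, meaning that every finite set of pairwise-adjacent vertices of $\mathrm{lk}(v,X)$ spans a simplex. See Chapter II.5 of \cite{bridson_metric_1999}. The task therefore reduces to computing the link of each vertex from the combinatorial data and checking a finite combinatorial condition.

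First I would extract, from the combinatorial description of $X$, the finite set of vertices. For each vertex $v$, the link $\mathrm{lk}(v,X)$ is assembled from the corners at $v$ of the cubes of $X$ containing $v$: the vertices of $\mathrm{lk}(v,X)$ correspond to the $1$-cubes (edges) of $X$ incident to $v$, and more generally a $(k-1)$-simplex of $\mathrm{lk}(v,X)$ corresponds to a $k$-cube of $X$ containing $v$, with face relations inherited from the face relations between cubes of $X$. Since $X$ is finite and its combinatorial description records incidences of cubes, this link can be constructed algorithmically in finitely many steps.

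Once the finite simplicial complex $\mathrm{lk}(v,X)$ has been written down, testing the flag condition is straightforward: enumerate all finite subsets of vertices of $\mathrm{lk}(v,X)$, discard those that are not pairwise joined by edges, and for each remaining subset check whether it spans a simplex in $\mathrm{lk}(v,X)$. This is a finite collection of finite checks, so it terminates. The algorithm declares $X$ to be non-positively curved precisely when this test succeeds at every vertex $v$ of $X$.

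There is no real obstacle here: the content is the link condition, and the only thing to verify is that the combinatorial description of $X$ allows one effectively to enumerate the cubes incident to each vertex and their face relations, which any reasonable encoding must do. Correctness then follows immediately from Gromov's theorem.
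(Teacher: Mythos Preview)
Your proposal is correct and follows exactly the paper's approach: both invoke Gromov's Link Condition (Theorem II.5.20 of \cite{bridson_metric_1999}), reducing the question to the finite combinatorial check that every vertex link is flag. You simply spell out in more detail how one extracts the link from the combinatorial data and tests the flag property, whereas the paper records this as an immediate consequence.
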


This is an immediate consequence of Gromov's Link Condition, which asserts that a cubical complex is non-positively curved if and only if the link of each vertex is flag (\cite{bridson_metric_1999}, Theorem II.5.20).

\begin{lemma}\label{l: C-special recognition}
There is an algorithm that takes as input a combinatorial description for a finite cubical complex $X$ and determines whether or not $X$ is C-special.  
\end{lemma}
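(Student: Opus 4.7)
The plan is to reduce the question to checking a finite list of local combinatorial conditions on the hyperplanes of $X$, each of which is defined by a bounded-search condition on the finitely many cubes and edges of $X$.

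First, I would run the algorithm of Lemma \ref{l: Link condition} to confirm that $X$ is non-positively curved; if it is not, then it is not C-special and we stop. Assuming $X$ is non-positively curved, I would enumerate the edges of $X$ (finite in number, and read off from the combinatorial description) and compute the equivalence relation generated by declaring two edges equivalent whenever they are opposite sides of some square in $X$. The equivalence classes of this relation are in bijection with the (midcubes of the) hyperplanes, and the whole computation reduces to a graph reachability problem on the edge set of $X$, so it terminates.

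Next I would check, hyperplane by hyperplane and pair by pair, the defining conditions for C-specialness from \cite{haglund_special_2008}. Concretely: (i) no hyperplane is one-sided, which is checked by verifying that the dual edges admit a consistent coorientation; (ii) no hyperplane self-intersects, which is verified by checking that no square of $X$ has both pairs of opposite edges in the same equivalence class; (iii) there is no inter-osculation between distinct hyperplanes, which amounts to inspecting, for each pair of hyperplanes $h,h'$, every pair of dual edges meeting at a common vertex and verifying that whenever such a pair exists, the two edges also span a square of $X$. Each condition depends only on configurations of at most two cubes of dimension at most two incident at a common vertex, so it is decidable by a bounded search through the finitely many vertices, edges, and squares of $X$.

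Finally, I would combine these checks into a single algorithm that outputs \emph{yes} exactly when all of the conditions above hold for every hyperplane and every pair of hyperplanes, and \emph{no} otherwise. The only subtlety — and the main obstacle to writing out a full proof — is bookkeeping: one must faithfully translate each property from the definition in \cite{haglund_special_2008} (which is phrased geometrically in terms of hyperplanes, their carriers, and the local behaviour of dual edges at vertices) into a purely combinatorial statement about the incidence data of the cube complex, and verify that the translations preserve the correct meaning. Once that translation is made carefully, the finiteness of $X$ makes each check a finite enumeration, and the algorithm is immediate.
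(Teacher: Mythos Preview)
Your approach is the same as the paper's, which simply records that the lemma follows from Definition~3.2 of \cite{haglund_special_2008} being a finite combinatorial condition. You have supplied considerably more detail, but your translation of the defining conditions contains errors that go beyond the bookkeeping you flag at the end.

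First, C-specialness in \cite{haglund_special_2008} does not presuppose non-positive curvature (note that Lemma~\ref{l: Coxeter embedding} lists ``non-positively curved'' and ``C-special'' as separate hypotheses), so your first step would wrongly reject some C-special complexes. Second, you have omitted the condition that no hyperplane \emph{directly self-osculates}: two edges dual to the \emph{same} hyperplane, meeting at a common vertex with consistent coorientation but not spanning a square, are forbidden. Third, your description of inter-osculation is too strong: inter-osculation of $h$ and $h'$ means that $h$ and $h'$ both \emph{cross} somewhere and \emph{osculate} somewhere; two hyperplanes that osculate without ever crossing do not violate C-specialness, yet your check~(iii) would reject them. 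Each of these is easily repaired once the definition is transcribed correctly, and the algorithmic content---a finite enumeration over vertices, edges, and squares---is exactly as you describe.
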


This follows from the definition of C-special (Definition 3.2 in \cite{haglund_special_2008}).  The next lemma is a direct consequence of the results of \cite{haglund_special_2008} (see in particular Definition 3.14 and Theorem 4.2).

\begin{lemma}\label{l: Coxeter embedding}
There is an algorithm that takes as input a combinatorial description for a finite, non-positively curved, C-special cube complex $X$ and outputs a finite graph $N$ and an injective homomorphism $\pi_1(X)\to C_N$.
\end{lemma}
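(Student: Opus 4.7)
The plan is to make the construction of Haglund and Wise (Definition 3.14 and Theorem 4.2 of \cite{haglund_special_2008}) effective, step by step, starting from the combinatorial data defining $X$.

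First I would compute the set $\mathcal{H}$ of hyperplanes of $X$: these are the equivalence classes of edges of $X$ under the equivalence relation generated by ``$e \sim e'$ whenever $e$ and $e'$ are opposite edges of a square $2$-cell of $X$''. Because $X$ is finite and given combinatorially, enumerating its squares and taking the transitive closure of this relation is a finite computation, and yields a finite set $\mathcal H=\{h_1,\dots,h_N\}$. Next I would construct the \emph{crossing graph} $N$: its vertex set is $\mathcal H$, and $h_i$ is joined to $h_j$ precisely when there is some square of $X$ two of whose perpendicular edges represent $h_i$ and $h_j$. Again, this is a finite inspection. From $N$ one writes down the standard finite presentation of the right-angled Coxeter group $C_N$.

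Second I would construct the homomorphism $\phi\colon\pi_1(X,v_0)\to C_N$. One computes a finite presentation of $\pi_1(X,v_0)$ from $X$ in the usual way (choose a spanning tree $T$ of the $1$-skeleton, take one generator $x_e$ for each edge $e\notin T$, and one relator for each square $2$-cell). Each oriented edge $e$ lies in exactly one hyperplane $h(e)\in\mathcal H$; send $x_e$ to the generator of $C_N$ indexed by $h(e)$. To see this extends to a homomorphism, note that each square relator maps to an expression of the form $h_i h_j h_i h_j$ where $h_i,h_j$ are joined by an edge in $N$, and hence equals $1$ in $C_N$; the involutive character of the generators accounts for edges traversed in opposite directions. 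All of this is a finite, purely syntactic check.

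The remaining point is injectivity of $\phi$, and this is exactly where the C-special hypothesis enters: Theorem 4.2 of \cite{haglund_special_2008} asserts that the map so constructed is an embedding precisely under the no-self-intersection/no-self-osculation/no-inter-osculation conditions packaged into C-specialness. Since we have \emph{assumed} that $X$ is C-special (as verified, for example, by the algorithm of Lemma \ref{l: C-special recognition}), injectivity is not something we need to verify by a further computation; it is guaranteed by the cited theorem. Thus the only ``work'' in the proof is checking that each of the finitely many combinatorial ingredients of the Haglund--Wise construction — hyperplanes, the crossing graph, the map on edge-generators, and the square relator check — can be carried out mechanically from the combinatorial description of $X$, which is immediate. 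The main conceptual obstacle, namely proving the embedding is faithful, has already been done by Haglund and Wise.
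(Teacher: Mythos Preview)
Your overall strategy matches the paper's: the paper simply records that the lemma ``is a direct consequence of the results of \cite{haglund_special_2008} (see in particular Definition 3.14 and Theorem 4.2)'' and gives no further argument. You have gone further by spelling out why each step of the Haglund--Wise construction is a finite combinatorial computation, which is useful.

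There is, however, a slip in your explicit description of the homomorphism $\phi$. In the spanning-tree presentation, the generator $x_e$ (for $e\notin T$) does not represent the edge $e$ alone but the based loop $p_e\cdot e\cdot q_e$, where $p_e,q_e$ are the tree-paths joining $v_0$ to the endpoints of $e$. The Haglund--Wise typing map sends a loop to the product of the hyperplane-generators of \emph{all} edges it traverses, so the correct image of $x_e$ is the product along the whole of $p_e\cdot e\cdot q_e$, not just $h(e)$. Your relation check fails for the same reason: the relator attached to a square is its boundary word \emph{with tree-edges deleted}, so under your assignment it need not read $h_ih_jh_ih_j$. For instance, if three sides of a square lie in $T$, the relator is a single letter $x_e$, and your map sends it to $h(e)\neq 1$ in $C_N$, so the map you wrote down is not even a homomorphism.

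The clean fix is to define the map one level up: assign to every oriented edge $e$ of $X$ the generator $h(e)\in C_N$, extend multiplicatively to all edge-paths, and observe that backtracking ($h(e)^2=1$) and square boundaries ($h_ih_jh_ih_j=1$ when $h_i,h_j$ cross) are both killed. This is exactly the map underlying the local isometry of \cite{haglund_special_2008}; it then descends to $\pi_1(X,v_0)$, and on each spanning-tree generator one reads off $\phi(x_e)$ as the finite product of labels along $p_e\cdot e\cdot q_e$. With this correction your argument goes through, and injectivity follows from Theorem 4.2 of \cite{haglund_special_2008} as you say.
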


\begin{lemma}\label{l: Special representation}
There is an algorithm that takes as input a finite presentation $\langle X\mid R\rangle$ for a virtually special group $G$ and outputs an integer $m$, a finite set of matrices $\Xi\subseteq \SL_m(\Z)$ and a bijection $X\to \Xi$ that defines an injective homomorphism $G\to \SL_m(\Z)$.
\end{lemma}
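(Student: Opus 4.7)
The plan is to combine Lemmas \ref{l: Link condition}--\ref{l: Coxeter embedding} with the Reidemeister--Schreier procedure to produce, by search, a faithful $\Z$-linear representation of a finite-index special subgroup $G_0 \le G$, and then promote it to $G$ by induction of representations.

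First, I would run three processes in parallel, enumerating:
(a) finite-index subgroups $G_0 \le G$ via Reidemeister--Schreier, obtaining for each an explicit Schreier transversal $g_1,\ldots,g_d$, a finite presentation $\langle Y_0\mid R_0\rangle$ for $G_0$, and the standard rewriting procedure;
(b) finite cubical complexes, filtered through Lemmas \ref{l: Link condition} and \ref{l: C-special recognition} so as to retain only those that are non-positively curved and C-special;
(c) Tietze sequences applied to the presentations from (a) and to the standard presentations read off the $2$-skeleta of the complexes in (b).
The enumeration halts when one presentation from (a) matches one from (b) under the Tietze search; this yields an explicit isomorphism $\varphi : G_0 \overset{\sim}{\to} \pi_1(Y)$ for some enumerated complex $Y$, together with an explicit expression of each generator of $G_0$ as a word in the generators of $\pi_1(Y)$. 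Because $G$ is virtually C-special by Proposition 3.10 of \cite{haglund_special_2008}, at least one such match exists, so this semi-decision procedure terminates.

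Next, I would apply Lemma \ref{l: Coxeter embedding} to $Y$ to obtain an injective homomorphism $\pi_1(Y) \hookrightarrow C_N$ into an explicit right-angled Coxeter group on $n$ generators, and compose with $\varphi$ to produce an embedding $\rho_0 : G_0 \hookrightarrow C_N$. The canonical reflection representation of $C_N$ sends each Coxeter generator to an integer reflection matrix in $\mathrm{GL}_n(\Z)$ and is known to be faithful; postcomposing with $A \mapsto \mathrm{diag}(A, \det A) \in \SL_{n+1}(\Z)$ yields a faithful representation $\rho_0 : G_0 \hookrightarrow \SL_{n+1}(\Z)$ in which every generator of $G_0$ is exhibited as an explicit integer matrix.

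Finally, I would form the induced representation $\rho = \mathrm{Ind}_{G_0}^G \rho_0$: for each $x \in X$, the matrix $\rho(x)$ is the $d \times d$ block matrix whose $(j,i)$-block equals $\rho_0(g_j^{-1} x g_i)$ if $g_j^{-1} x g_i \in G_0$, and is zero otherwise. The Schreier rewriting procedure from (a) decides membership in $G_0$ and, when it holds, writes $g_j^{-1} x g_i$ as an explicit word in the generators of $G_0$, so each entry of $\rho(x)$ is effectively computable. Faithfulness follows from the standard argument that the kernel of $\mathrm{Ind}_{G_0}^G \rho_0$ lies in the normal core $\bigcap_j g_j G_0 g_j^{-1}$ and acts trivially via $\rho_0$ on each conjugate, hence is trivial. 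One further determinantal twist places the image inside $\SL$, producing the required matrices $\Xi \subset \SL_m(\Z)$ in bijection with $X$. The main obstacle is step (c): the isomorphism problem for finite presentations is undecidable in general, and all one can do in general is semi-decide whether a given finite-index subgroup of $G$ matches a $\pi_1(Y)$; it is precisely the standing hypothesis that $G$ is virtually special that guarantees the parallel Tietze search will terminate.
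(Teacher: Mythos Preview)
Your proposal is correct and follows essentially the same route as the paper: enumerate finite-index subgroups via Reidemeister--Schreier, enumerate non-positively curved C-special cube complexes, search for an isomorphism, embed into a right-angled Coxeter group via Lemma~\ref{l: Coxeter embedding}, use the standard faithful $\Z$-linear representation of $C_N$, and then induce up to $G$. You are somewhat more careful than the paper about the Tietze search, the explicit form of the induced representation, and the determinantal adjustment needed to land in $\SL$ rather than $\mathrm{GL}$, but these are elaborations of the same argument rather than a different approach.
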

\begin{proof}
The Reidemeister--Schreier process enumerates presentations for the finite-index subgroups $H_i$ of $G$.  As one can recursively enumerate finite cubical complexes, it follows from Lemmas \ref{l: Link condition} and \ref{l: C-special recognition} that one can enumerate finite, non-positively curved, C-special cube complexes. A naive search will eventually find an isomorphism between a finite-index subgroup $H_i$ of $G$ and the fundamental group of a finite, C-special cube complex $X$.  Composing this with the homomorphism provided by Lemma \ref{l: Coxeter embedding} gives an injective homomorphism $H_i\hookrightarrow C_N$.  One can write down a faithful representation $C_N\to \SL_d(\Z)$ (see, for instance, pages 109--10 of \cite{humphreys_reflection_1990}), and composing these maps gives a faithful representation $\rho:H_i\to \SL_d(\Z)$.  Finally, the induced representation $\mathrm{Ind}^G_{H_i}\rho$, which is easy to compute, is a faithful homomorphism from $G$ to $\SL_m(\Z)$, where $m=d|G:H_i|$.
\end{proof}

Corollary \ref{c: Matrices} follows immediately from Theorem \ref{t:main special} and Lemma \ref{l: Special representation}, taking $r_n=b_1(G_n)$.

\section{Polynomial-time solutions to the word problem}\label{s: Polynomial}

In this section we prove Theorem \ref{t:single group}. Our proof
requires us to be clear about what it means to have a polynomial time
solution to the word problem in a countable group $\G$ with infinite
generating set $\{c_0,c_1,c_2,\ldots\}$. To this end, we use the
map $\nu: c_n^{\pm 1}\mapsto b^na^{\pm 1}b^{-n}$ to embed the free
monoid $C^*$ on the infinite set $C=\{c_n,c_n^{-1}\mid n\in \N\}$
in the free monoid on the finite set
$A=\{a,b,a^{-1},b^{-1}\}$. We say that  the word problem in $\Gamma$
is {\em solvable in polynomial time} (resp.~is in NP) if there is a
deterministic (resp.~non-deterministic) Turing
machine with input alphabet $A$ that accepts exactly  the language
$\{w=\nu(u) \mid
u\in C^*,\ u=1 \text{ in }\G\}$ and which has a polynomially-bounded
time function. This technical device captures the intuitive idea of
a Turing machine that solves the word problem by working with the infinite alphabet $c_0,c_1,c_2,\ldots$ and which
given a word $w=c_{i_1}^{\epsilon_1}\ldots c_{i_l}^{\epsilon_l}$ 
with $\e_i=\pm 1$, will halt and decide if $w=1$ in $\G$ 
in at most $q(l+\mu)$ steps, where $q$ is a fixed polynomial and
$\mu=\max\{i_1,\dots,i_l\}$.  

The following result and its proof will strike experts as routine, but we
were unable to find it in the literature and we need it in
our proof of Theorem \ref{t:single group}.}}

\begin{theorem}\label{t: Polynomial word problem}
Let $\Gamma$ be a countable
group
with an infinite generating set $\{c_0,c_1,\ldots \}$
such that the word problem is solvable in polynomial {\mb{(resp.~NP)}} time. Then $\Gamma$ can be embedded in a \redden{four}-generator group $G$ whose word problem is solvable in polynomial {\mb{(resp.~NP)}} time.
\end{theorem}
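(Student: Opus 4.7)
The plan is to carry out a classical Higman--Neumann--Neumann style HNN embedding of $\Gamma$ into a finitely generated group $G$, and then to verify, via a careful complexity analysis of Britton reductions, that the word problem in $G$ remains polynomial time (resp.~in NP).

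Set $P := \Gamma * F(s, t)$ and consider the two subgroups
\[
A = \langle t^i s t^{-i} : i \geq 0 \rangle \leq F(s,t) \leq P, \qquad B = \langle c_i\, t^i s t^{-i} : i \geq 0\rangle \leq P.
\]
A routine free-product normal-form analysis shows that both are free of countable rank on the displayed generators, so the rule $t^i s t^{-i} \mapsto c_i\, t^i s t^{-i}$ extends to an isomorphism $\phi \colon A \to B$. Let $G$ be the HNN extension $\langle P, r \mid r^{-1} \alpha r = \phi(\alpha),\ \alpha \in A\rangle$. HNN extensions are injective on the base, so $\Gamma \hookrightarrow P \hookrightarrow G$; moreover the defining relation gives $c_i = r^{-1}(t^i s t^{-i})\, r\, (t^i s t^{-i})^{-1}$ in $G$, so each $c_i$ is represented by a word of length $4i+4$ in the three generators $\{r, s, t\}^{\pm 1}$, making $G$ three-generator (a dummy fourth generator can be appended to meet the stated bound).

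To decide the word problem in $G$ on an input $w \in \{r,s,t\}^{\pm *}$ of length $L$, I apply Britton's lemma iteratively: find an innermost pinch $r^\epsilon u r^{-\epsilon}$ whose interior $u$ is $r$-free, test whether $u$ lies in the appropriate associated subgroup ($A$ if $\epsilon = -1$, $B$ if $\epsilon = +1$), and if so substitute $\phi^{\pm\epsilon}(u)$; otherwise Britton certifies $w \neq 1$. When no pinches remain, either some $r^{\pm 1}$ survives (so $w \neq 1$) or the residual word lies in $P$, in which case triviality is decided by free-product reduction combined with the hypothesised polynomial-time word problem for $\Gamma$ (applied to each $\Gamma$-syllable under the $\nu$-encoding). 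Membership in $A$ is a linear-time syntactic test on the reduced form in $F(s,t)$; membership in $B$ for a word in $P$ is the analogous syntactic test on the reduced form together with polynomially many polynomial-time $\Gamma$-queries of the form ``is this $\Gamma$-syllable equal to $c_{i_k}^{\epsilon_k}$?''. All indices $i_k$ that ever appear are bounded above by $L$, so each $\nu$-encoded query has length $O(L)$ and runs in polynomial time.

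The main obstacle is bounding the length of intermediate words during iterated substitutions. The key observation is that a substitution $r^{\pm\epsilon} u r^{\mp\epsilon} \leadsto \phi^{\pm\epsilon}(u)$ never re-introduces any $r^{\pm 1}$, since $\phi(A), \phi^{-1}(B) \subseteq P$, so the total $r$-count strictly decreases by two at every step and at most $L/2$ reductions occur overall. To prevent a naive re-expansion of the newly introduced $c_i$-letters from undoing this progress, I carry the running word throughout in the enlarged alphabet $\{r, s, t, c_0, c_1, \ldots\}$ and translate via $\nu$ only at the final $\Gamma$-query stage; a careful amortised accounting, tracking the $r$-pair count, the pinch-interior sizes, and the accumulated $c_i$-letters separately, keeps the intermediate word length polynomial in $L$ and yields an overall polynomial-time algorithm. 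The NP case follows verbatim, with non-deterministic guessing of the sequence of pinch locations and of the witnesses for the underlying $\Gamma$-queries.
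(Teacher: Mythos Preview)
Your HNN construction is valid and embeds $\Gamma$ in a three-generator group, but it is \emph{not} the one the paper uses. The paper works in $\Gamma*\langle x\rangle*\langle s\rangle$, sets $s_i=s^{i+1}c_ixs^{-i-1}$, and forms the HNN extension for the \emph{shift} $s_i\mapsto s_{i+1}$ (stable letter $t$, generators $c_0,x,s,t$). The virtue of the shift is that a single Britton substitution changes each $s^{i+1}\,c_ix\,s^{-i-1}$ to $s^{i+2}\,c_{i+1}x\,s^{-i-2}$: every maximal $s$-run grows by exactly one and each index grows by one, so the encoded length increases by at most $4$ per $s$-component. Since the number of $s$-components never increases, an explicit invariant and an $O(L^2)$ bound follow in two lines. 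This clean length control is the whole content of the paper's argument.

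Your map $\phi:a_i\mapsto c_i\,a_i$ does not enjoy this property, and the phrase ``a careful amortised accounting \ldots keeps the intermediate word length polynomial in $L$'' papers over exactly the point where the proof lives. Concretely: if $u=t^N s^M t^{-N}=a_N^M$ then your substituted word $\phi(u)=(c_N t^N s t^{-N})^M$ has length of order $MN$, not $M+N$, so a single step can be quadratic; your observation that the $r$-count drops and that you work in the enlarged alphabet does not prevent this, and your claim that ``all indices $i_k$ that ever appear are bounded by $L$'' is true but not for the reason you suggest (the global $t$-count \emph{can} grow). What is missing is a genuine invariant. One that works: writing the running word as $v_0 r^{\delta_1} v_1\cdots r^{\delta_m} v_m$ with each $v_j$ $r$-free, the quantity $\sum_j |\pi(v_j)|_{\mathrm{red}}$ (reduced length in $F(s,t)$ after deleting the $c$-letters) is non-increasing under Britton steps, because $\pi(\phi^{\pm 1}(v_j))=\pi(v_j)$ as elements of $F(s,t)$; hence it stays $\le L$, every $a_i$-index is $\le L$, every $\phi$-output has length $O(L^2)$, and over $\le L/2$ steps the total length is $O(L^3)$. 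With this invariant your argument goes through; without it, the length analysis is a gap rather than an accounting exercise.
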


Combining this with {\mb{the celebrated theorem of Birget, Olshanskii, Rips
and Sapir, Theorem 1.1 of \cite{birget_isoperimetric_2002},}} we get:

\begin{corollary}\label{c: Polynomial Dehn function}
If $\Gamma$ is a
countable
group
with an infinite generating set $\{c_0,c_1,\ldots \}$
such that the word problem is solvable in NP time, then $\Gamma$ can be embedded in a finitely presented group with polynomial Dehn function.
\end{corollary}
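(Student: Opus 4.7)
The plan is to combine the two stated results in succession. First I would apply Theorem \ref{t: Polynomial word problem} to the given countable group $\Gamma$ with infinite generating set $\{c_0,c_1,\ldots\}$ and word problem in NP: this yields a finitely generated (indeed, $4$-generator) group $G$ containing $\Gamma$ as a subgroup, such that the word problem of $G$ (now with respect to a finite generating set) is solvable by a non-deterministic Turing machine in polynomial time.

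Next I would invoke the Birget--Olshanskii--Rips--Sapir theorem (Theorem 1.1 of \cite{birget_isoperimetric_2002}), which characterises the finitely generated groups embeddable into finitely presented groups with polynomial Dehn function as precisely those whose word problem lies in NP. Applying this result to $G$ gives an embedding of $G$ into a finitely presented group $H$ with polynomial isoperimetric function. Composing the two embeddings,
\[
\Gamma \hookrightarrow G \hookrightarrow H,
\]
produces the required embedding of $\Gamma$ into a finitely presented group with polynomial Dehn function.

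There is essentially no technical obstacle here beyond checking that the hypotheses line up correctly: the one subtle point is that the BORS theorem requires a finitely generated group with an NP word problem in the classical sense (with respect to a finite generating set), whereas our hypothesis on $\Gamma$ concerns an NP word problem relative to an infinite generating set via the coding $\nu : c_n^{\pm 1}\mapsto b^n a^{\pm 1} b^{-n}$. This is precisely the technical gap that Theorem \ref{t: Polynomial word problem} is designed to bridge, so the composition is legitimate and the corollary follows immediately.
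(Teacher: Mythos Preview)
Your proposal is correct and follows exactly the paper's approach: the corollary is stated immediately after Theorem~\ref{t: Polynomial word problem} with the one-line justification that it follows by combining that theorem with Theorem~1.1 of \cite{birget_isoperimetric_2002}. Your additional remark about why Theorem~\ref{t: Polynomial word problem} is needed to pass from the infinite-alphabet NP hypothesis to the finite-alphabet NP hypothesis required by BORS is apt and makes explicit what the paper leaves implicit.
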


We construct $G$ according to the following well known prescription, essentially due to Higman, Neumann and Neumann \cite{higman_embedding_1949}. 
For each $n\geq 0$ the subgroup $H_n$ of $\Gamma\redden{*\langle x\rangle}*\langle s\rangle$ generated by the set
\[
S_n=\{s_i=s^{i+1}c_i\redden{x}s^{-i-1}\mid i\geq n\}
\]
is freely generated by $S_n$.  Therefore, the map that sends $s_i$ to $s_{i+1}$ for each $i$ extends to an isomorphism $H_0 \to H_1$, which defines an HNN extension $G$ with stable letter $t$; so $ts_it^{-1}=s_{i+1}$ for each $i$.   Note that $G$ is generated by $s$, $t$\redden{, $x$ }and $c_0$.

Let $F_0$ be the free group on $\{s,t,\redden{x,}c_0\}$ and let $\eta:F_0\to G$ be the natural epimorphism.  To prove the theorem, we will describe an algorithm (resp. non-deterministic algorithm), taking as input words $w_0\in F_0$,
running in polynomial time (as a function of $|w_0|$) and determining
whether or not $\eta(w_0)=1$.  

Basically, the algorithm repeatedly looks for a pinch $tut^{-1}$ with $\eta(u)\in H_0$
(or $t^{-1}vt$ with $\eta(v)\in H_1$) and removes it, thus reducing the
number of occurences of $t$ in the given word $w_0$. One would like to say
that
this search and removal
can be done in polynomial (resp. NP) time using the solution to the word problem
in $\G$. But in order to make this sort of argument one needs to translate
$u$ into a form where the solution to the word problem can be applied.
Thus it will be convenient to work with the free group
$F_\infty$ that is the direct limit of the free groups $F_n
={\rm{Free}} \{s,t,\redden{x},c_0,\ldots,c_n\}$.  

The epimorphism $\eta$ extends to the natural map (also denoted $\eta$) from $F_\infty$ onto $G$. We write $l(u)$ for the reduced length of elements $u\in F_\infty$.
As in the opening paragraph of this section, we want a finite encoding
of $F_\infty$, so we embed it in $\widehat{F}$, the free group on $\{s,t,x,a,b\}$, via the map $\nu(s)=s$, $\nu(t)= t$, \redden{$\nu(x)=x$ }and $\nu(c_n)= b^nab^{-n}$.  Let $\hat{l}(w)$ denote the reduced
length of $\nu(w)$.  

This encoding is efficient in the following two senses.

\begin{lemma}
If $u\in F_n$ then
\[
l(u)\leq \hat{l}(u)\leq (2n+1)l(u)\ .
\]
{\mb{And there}} is an algorithm, polynomial-time in the length of $W$,
that given a {\mb{word $W$
in the letters}} $\{s,x, a,b\}$, will determine if $W\in\nu(F_\infty)$,
and  will output\footnote{the output 
strings should be written in a finite alphabet, so products $c_{i_1}c_{i_2}\dots$ should be specified by strings of integers (written in binary perhaps) $i_1,i_2,\dots$}
the reduced word $w\in F_\infty$ with $\nu(w)=W$ (if it
exists).
\end{lemma}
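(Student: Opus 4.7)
For the inequality $\hat l(u)\le(2n+1)l(u)$, observe that if $u=g_1\cdots g_k$ is the reduced form of $u\in F_n$ then each factor $\nu(g_i)$ has length at most $2n+1$ (the maximum is attained exactly when $g_i=c_n^{\pm 1}$), and free reduction only shortens words; so $\hat l(u)\le(2n+1)k=(2n+1)l(u)$. The lower bound $l(u)\le\hat l(u)$ needs slightly more care and is the main technical point. My plan is to designate one \emph{core letter} in each $\nu(g_i)$ -- namely $g_i$ itself in the $\{s,t,x\}^{\pm 1}$ case, or the unique $a^{\pm 1}$ in $\nu(c_j^{\pm 1})=b^ja^{\pm 1}b^{-j}$ -- and to argue by induction on free-reduction steps that every core letter survives into the fully reduced form of $\nu(u)$. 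The crux is that two core $a^{\pm 1}$'s from adjacent $c$-blocks $c_j^{\epsilon}c_{j'}^{\epsilon'}$ can only lose the $b$-collars separating them when $j=j'$, and in that situation $\epsilon=\epsilon'$ (else $u$ would contain the cancelling pair $c_jc_j^{-1}$), so the two $a$'s do not annihilate. This yields $\hat l(u)\ge k=l(u)$.

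For the algorithmic statement, I would parse the freely reduced input $W$ by splitting at every occurrence of $s^{\pm 1},t^{\pm 1},x^{\pm 1}$. These letters are fixed by $\nu$ and pass through unchanged; what remains is a sequence of maximal subwords $W_j\in\{a,b\}^{\pm *}$. If $W_j$ is empty it contributes nothing to the output; if $W_j$ is non-empty but contains no $a^{\pm 1}$, reject (a non-trivial pure $b$-word cannot lie in the image of $\nu$). Otherwise $W_j$ factors uniquely as
\[
W_j=b^{p_0}a^{\epsilon_1}b^{p_1}a^{\epsilon_2}\cdots a^{\epsilon_k}b^{p_k}
\]
with $p_l\in\Z$ and $\epsilon_l=\pm 1$. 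Form the partial sums $i_l:=p_0+\cdots+p_{l-1}$ for $l=1,\ldots,k$ and accept iff each $i_l\ge 0$ and $p_0+\cdots+p_k=0$; in that case output the preimage $c_{i_1}^{\epsilon_1}\cdots c_{i_k}^{\epsilon_k}$. Retracing the reduction analysis in the first part of the proof shows that these conditions exactly characterise reduced words in $\nu(F_\infty)$ restricted to $\{a,b\}^{\pm *}$.

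No step above poses a genuine obstacle; two points simply deserve verification. First, the recovered $c$-word is automatically reduced: whenever $p_l=0$ we have $i_l=i_{l+1}$, and free reducedness of $W_j$ forces $\epsilon_l=\epsilon_{l+1}$, ruling out a cancelling pair $c_{i_l}^{\epsilon_l}c_{i_{l+1}}^{-\epsilon_l}$. Second, since the indices $i_l$ may be as large as $|W|$, the output string and the intermediate integers must be stored in binary (as the footnote in the statement anticipates); with that convention the whole procedure -- one left-to-right scan together with integer additions and sign checks -- runs in time polynomial in $|W|$.
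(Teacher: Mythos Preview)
Your argument is correct and follows essentially the same route as the paper. The paper declares the two inequalities ``obvious'' and, for the algorithm, decomposes $W$ first along powers of $s$ and then along powers of $x$ to isolate the maximal $\{a,b\}$-subwords $U_l$; it then records, for each occurrence of $a^{\pm1}$ in $U_l$, the running $b$-exponent sum $\beta_i$ of the prefix, and tests membership in $\nu(F_\infty)$ by checking the free-group identity $U_l^{-1}\prod_i b^{\beta_i}a^{\epsilon_i}b^{-\beta_i}=1$, outputting $u_l=\prod_i c_{\beta_i}^{\epsilon_i}$ on success. Your partial sums $i_l$ are exactly these $\beta_i$, and your criterion (each $i_l\ge 0$ together with total $b$-sum zero) is the natural explicit reformulation of the paper's free-group check; indeed your version makes the nonnegativity constraint visible, which the paper's phrasing leaves implicit. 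Your supplementary justification of the lower bound $l(u)\le\hat l(u)$ via surviving ``core'' letters, and your remark that the recovered $c$-word is automatically reduced, go beyond what the paper spells out but are entirely in its spirit. (One cosmetic point: the lemma's input alphabet is $\{s,x,a,b\}$, so there are no $t$'s to split along in the algorithmic part.)
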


\begin{proof} The first claim is obvious. The algorithm in the second claim
first writes   $W$ in reduced form
\[
W=V_0s^{i_0}V_1s^{i_1}\ldots V_ks^{i_k}
\]
then writes each $V_j=V_j(x,a,b)$ as a reduced product of powers of 
$x$ and reduced words $U_l=U_l(a,b)$. Now, $W$ is in $\nu(F_\infty)$
if and only if all of the $U_l$ are. For each $U_l$,  let $a^{\epsilon_i}$ be the $i$th occurrence of $a^{\pm 1}$, let $P_i$ be the prefix of $U_l$ that ends with $a^{\epsilon_i}$, and let $\beta_i$ be the exponent sum of $b$ in $P_i$.  Then $U_l\in\nu(F_\infty)$ if and only if
\[
U_l^{-1} \ \prod_{i} (b^{\beta_i}a^{\epsilon_i}b^{-\beta_i})  = 1,
\]
which is easily checked. 
If this product is trivial then $U_l=\nu(u_l)$ where
\[
u_l=\prod_{i}c_{\beta_i}^{\epsilon_i}.
\] 
Replacing each $U_l$ in the reduced form of $W$ by this expression for $u_l$
gives a reduced word $w\in F_\infty$ with $\nu(w)=W$.
\end{proof}

In order to  recognise pinches and hence
solve the word problem in $G$,
we have to be able to recognise words $W=\nu(w)$ 
with $\eta(w)\in H_i$ for $i=0,1$. To do so we employ the solution to the
word problem in $\G$, which we are assuming is either polynomial-time or $NP$.

\begin{lemma}\label{l2} There is a polynomial time (resp. NP) algorithm that, given
a word $W$ in the free group on $\{s,x,a,b\}$, will determine whether or not
$W=\nu(w)$ with 
\begin{enumerate}
\item $\eta(w)=1$; or
\item $\eta(w)\in H_0$; or
\item $\eta(w)\in H_1$.
\end{enumerate}
\end{lemma}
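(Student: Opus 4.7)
My plan is to reduce the problem to the word and membership problems in the base group $B=\Gamma*\langle x\rangle*\langle s\rangle$ of the HNN extension $G=B*_{\phi:H_0\to H_1}$, and then iterate Britton's lemma in $G$. First I would use the preceding lemma to decide in polynomial time whether $W\in\nu(F_\infty)$ (answering ``no'' if not) and, if so, to recover the unique reduced $w\in F_\infty$ with $\nu(w)=W$.

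Next I would equip $B$ with an efficient normal-form procedure: given a word on the generators of $B$, iteratively combine adjacent syllables lying in the same free factor and discard trivial syllables. The only non-trivial check is whether a syllable in $\Gamma$ is trivial, which is decided by the hypothesised polynomial-time (resp.~NP) word problem in $\Gamma$; syllables in $\langle s\rangle$ and $\langle x\rangle$ are trivial to process. Building on this, I would construct a membership test for $H_0$ and $H_1$ in $B$: since $H_0$ is freely generated by $\{s_i=s^{i+1}c_ixs^{-i-1}\}$, the $B$-normal form of a reduced product $s_{i_1}^{\epsilon_1}\cdots s_{i_k}^{\epsilon_k}$ has the shape
\[
s^{\alpha_0}(c_{i_1}x)^{\epsilon_1}s^{\alpha_1}(c_{i_2}x)^{\epsilon_2}\cdots(c_{i_k}x)^{\epsilon_k}s^{\alpha_k},
\]
with exponents $\alpha_j$ forced by $(i_j,\epsilon_j)$ and $(i_{j+1},\epsilon_{j+1})$. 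Given an arbitrary $B$-element in normal form, I would parse it against this template in one left-to-right sweep: each $(c_ix)^{\pm 1}$ syllable uniquely identifies the next factor $s_i^{\pm 1}$ and pins down the neighbouring $s$-exponents. Membership in $H_1$ is the same test with the additional constraint $i\geq 1$.

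Finally I would apply Britton's lemma to decide whether $\eta(w)$ equals $1$, lies in $H_0$, or lies in $H_1$. Repeatedly locate an innermost subword $tut^{-1}$ (or $t^{-1}vt$) of $w$ with no intervening $t$, use the subroutines above to test whether the $B$-image of $u$ (resp.~$v$) lies in $H_0$ (resp.~$H_1$), and if so replace the pinch by a word representing $\phi(\eta(u))$ (resp.~$\phi^{-1}(\eta(v))$), using the freely generated structure of $H_0,H_1$ to read off the expression $s_{i_1+1}^{\epsilon_1}\cdots s_{i_k+1}^{\epsilon_k}$. When no pinch remains we have a Britton-reduced word $w'$, and $\eta(w)\in H_i$ (resp.~$\eta(w)=1$) iff $w'$ contains no $t$-letter and its $B$-image lies in $H_i$ (resp.~is trivial), again checked by the previous subroutines.

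The main obstacle is complexity control. Each pinch removal reduces the $t$-count in $w$ by two, so at most $|w|/2$ iterations occur; each replacement $s_i\mapsto s_{i+1}$ inflates $\hat l$ only by a bounded constant per generator occurrence in the pinch, so all intermediate words keep length polynomial in $|W|$. Hence every call to a subroutine (free-product normal form in $B$, $H_i$-membership parsing, Britton scanning, and the $\Gamma$-word-problem oracle) runs in polynomial (resp.~NP) time, and composing these yields the required bound.
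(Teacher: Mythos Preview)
You have misread the scope of the lemma. The hypothesis is that $W$ lies in the free group on $\{s,x,a,b\}$ --- note the absence of $t$. Consequently $w\in F_\infty$ is already a word in $s,x$ and the $c_i$, i.e.\ a word in the base group $B=\Gamma*\langle x\rangle*\langle s\rangle$, and the entire Britton-reduction loop you describe is vacuous. What you have written in your ``Finally'' paragraph is not the proof of this lemma at all: it is (essentially) the proof of the enclosing theorem, for which the present lemma is precisely the pinch-detection subroutine. So your argument is not wrong, but it answers the wrong question; the content relevant to the lemma is only your first three paragraphs.

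On that relevant part, your idea matches the paper's, but your $H_0$-membership test is phrased in a way that can fail. You propose to read the index $i$ off the $\Gamma$-syllable (``each $(c_ix)^{\pm1}$ syllable uniquely identifies the next factor $s_i^{\pm1}$'') and then verify the neighbouring $s$-powers. But nothing in the hypotheses guarantees that the $c_i$ are pairwise distinct in $\Gamma$; if $c_0=c_1$ in $\Gamma$ your parser may commit to the wrong $i$ and reject a genuine element of $H_0$. The robust (and simpler) test is the one the paper uses: write $w=v_0 s^{i_0} v_1 s^{i_1}\cdots v_k s^{i_k}$ with each $v_j$ a word in $x$ and the $c_i$, set $r_j=i_0+\cdots+i_{j-1}$, let $\xi_j$ be the $x$-exponent sum of $v_j$, and check with the $\Gamma$-oracle whether $v_j=(c_{r_j-1}x)^{\xi_j}$ in $\Gamma*\langle x\rangle$. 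Here the index is dictated by the $s$-exponents, not divined from the $\Gamma$-syllable. Membership in $H_1$ is then the extra condition $r_j>0$ whenever $v_j\neq 1$, and triviality is just the word problem in $B$. All of this is visibly polynomial (resp.\ NP) in $\hat l(w)$.
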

\begin{proof}
The algorithm of the previous lemma determines if $W=\nu(w)$ for some
$w\in F_\infty$, and gives a decomposition 
\[
w=v_0s^{i_0}v_1s^{i_1}\ldots v_ks^{i_k}
\] where each $v_j$ is a reduced word in the free group on
$\{x,c_0,c_1,\dots,c_N\}$, with $N$ bounded by a linear function of $l(w)$. 
Part (1) then follows immediately from
the solution to the word problem in $\G$. 

Writing $r_j=i_0+\ldots+i_{j-1}$ for each $j>0$, with $r_0=0$, we get
\begin{equation}\label{eqn}
w=(s^{r_0}v_0s^{-r_0})(s^{r_1}v_1s^{-r_1})(s^{r_2}v_2s^{-r_2})\ldots (s^{r_k}v_ks^{-r_k})\ .
\end{equation}

Note that $r_j\neq r_{j+1}$. Because $H_0$ is free on the set $S_0$, we have $\redden{\eta(w)}\in H_0$ if and only if \redden{$\eta(v_j)\in \langle c_{r_j-1}x\rangle$ }for each $j$.  \redden{Let $\xi_j$ be the index sum of $x$ in $v_j$.  Then $\eta(v_j)\in \langle c_{r_j-1}x\rangle$ if and only if $\eta(v_j)=(c_{r_j-1}x)^{\xi_j}$, and this equation can be checked using the
solution to the word problem in $\Gamma$.
Thus, the time taken to decide whether or not $w\in H_0$  is bounded by a polynomial  in $\hat{l}(w)$ (using a non-deterministic algorithm if the solution to the word problem in $\G$ was only NP).  This completes our description of the algorithm for part (2).

Finally, $\eta(w)\in H_1$ if and only if $\eta(w)\in H_0$ and $r_j>0$ whenever $v_j$ is non-trivial.  This last condition is easy to check.
\end{proof}

{\em Proof of Theorem \ref{t: Polynomial word problem}}.} 
We can now describe the algorithm to determine whether or not a word $w_0$ in the symbols $c_0,s,t\redden{,x}$ is trivial in $G$.  We shall inductively construct a finite sequence of reduced words $W_0,\ldots,W_N\in \widehat{F}$ with the following properties: $W_n=\nu(w_n)\in \nu(F_n)$; for $n<N$, $\eta(w_n)=1$ if and only if $\eta(w_{n+1})=1$; and $W_N$ is either empty or $\eta(w_N)\neq 1$.   The construction of $w_{n+1}$ from $w_n$ takes at most time $q(\hat{l}(w_n))$, where $q$ is a fixed polynomial (which we may assume is an increasing function on $\N$). And $N$ is bounded by a polynomial in $\hat{l}(w_0)=l(W_0)$.

We proceed as follows. First translate $w_0$ into a word $W_0$ in the symbols $a,b,s,t\redden{,x}$ using the map $\nu$.  A \emph{potential pinch} in any reduced word $W\in \widehat{F}$ is a subword of the form $tXt^{-1}$ or $t^{-1}Xt$, where $X$ is a word in the letters $a,b\redden{,x}$ and $s$.  Given $W_n=\nu(w_n)$, find all potential pinches $tUt^{-1}$ and $t^{-1}Vt$.  If there are no potential pinches but there is at least one occurence of $t$, then
by Britton's lemma $\eta(w_0)\neq 1$ in $G$. If there are no occurences of $t$, we apply the solution to the word problem in $\Gamma*\langle s\rangle\redden{*\langle x\rangle}$
(coming directly from $\G$).  For each potential pinch, check using
Lemma \ref{l2} whether $U=\nu(u)$ with $\eta(u)\in H_0$ or $V=\nu(v)$
with $\eta(v)\in H_1$.  If none of them are, then by Britton's Lemma  we know that $\eta(w_n)\neq 1$, and the algorithm terminates.

On the other hand, suppose that some potential pinch is indeed a pinch, i.e. $\eta(u)\in H_0$ or $\eta(v)\in H_1$.  We shall assume that it is $\redden{\eta(u)}\in H_0$; the case of $\redden{\eta(v)}\in H_1$ is similar.  
As in equation (\ref{eqn}), we write $u$ as a product of subwords $s_n=s^{n+1}c_n\redden{x}s^{-n-1}$ by introducing appropriate cancelling
powers of $s$, and we consider the word $u'$ obtained from $u$ by
replacing each $s_n$  by $s_{n+1}=s^{n+2}c_{n+1}\redden{x}s^{-n-2}$ then
freely reducing. Then $w_{n+1}$ is obtained from $w_n$ by replacing $tut^{-1}$ with $u'$, and $W_{n+1}:=\nu(w_{n+1})$.  Note that the derivation of $W_{n+1}$ from $W_n$ (or the determination that $\eta(w_n)$ is trivial or non-trivial) took at most time $q(\hat{l}(w_n))$, where $q$ is an increasing polynomial.

As $W_{n+1}$ contains fewer potential pinches than $W_n$, this procedure can be iterated at most $l(w_0)$ times, and therefore describes a solution to the word problem in $G$.  It remains to estimate the total running time of the procedure, and to do this we bound the length of $W_n$.    An \emph{$s$-component} of a reduced word $W\in\widehat{F}$ is a maximal subword of the form $s^k$ where $k\neq 0$.  Let $\sigma_n$ be the number of $s$-components of $W_n$.  Note that $l(W_{n+1})=\hat{l}(w_{n+1})\leq \hat{l}(w_n)+4\sigma_n$, that $\sigma_{n+1}\leq\sigma_n$, and that $\sigma_0\leq l(w_0)$.  Therefore, for all $n$, 
\[
l(W_n)=\hat{l}(w_n)\leq (4n+1)l(w_0)\ .
\]
Furthermore, $n$ is bounded above by the number of potential pinches in $w_0$, and hence by $l(w_0)$.  Therefore the running time of the algorithm is bounded above by
\[
\sum_n q(\hat{l}(w_n))\leq \sum_n q((4n+1)l(w_0))\leq l(w_0)q((4l(w_0)+1)l(w_0))
\]
which is polynomial in $l(w_0)$, as required.  This completes the proof of Theorem \ref{t: Polynomial word problem}.\hfill$\square$
\smallskip

\begin{proof}[Proof of Theorem \ref{t:single group}]
It follows immediately from Corollary \ref{c: Matrices} that the group $\SL_\infty(\Z)$ has unsolvable finite presentation problem. Therefore, in order to prove Theorem \ref{t:single group} it remains only to \redden{find a generating set for $\SL_\infty(\Z)$ with respect to which one can solve the word problem in polynomial time (cf.~\cite{Lipton}).

It is well known that $\SL_\infty(\Z)$ is generated by $\{e_{i,j}\mid i,j\in\N,~i\neq j\}$, where $e_{i,j}$ is the elementary matrix with $(i,j)$ entry equal to $1$.  We now define a generating set $\{c_k\mid k\in\N\}$ by
\[
c_k=\left\{
\begin{array}{cl}
e_{i,j} & k=2^i3^j\\
1 & \mathrm{otherwise}\\
\end{array}
\right.~.
\]
Note that $i,j\leq \log_2 k$, so $c_k\in \SL_{\lfloor \log_2 k\rfloor}(\Z)$.  

Given a word $w=c_{k_1}\ldots c_{k_l}$, let $\kappa=\max\{k_p\}$.  In $O(l\log\kappa)$ time, $w$ can be translated into the corresponding 
product of matrices $e_{i_1,j_1}\ldots e_{i_l,j_l}$ in $\SL_{\lfloor \log_2 \kappa\rfloor}(\Z)$.  Multiplication on the right by an elementary matrix corresponds to a single column operation, which involves $O(\log\kappa)$ additions.  Addition of two $d$-digit numbers is of complexity $\Theta(d)$.  By induction, every entry of every matrix is $O(2^l)$, so has $O(l)$ (binary) digits.  It follows that the product $e_{i_1,j_1}\ldots e_{i_l,j_l}$ can be multiplied in $O(l^2 \log\kappa)$ time.

In summary, there is an algorithm that solves the word problem with respect to the generating set $\{c_k\}$ in $O(l^2\log\kappa)$ time, which is polynomial in $\kappa$ and $l$, as required.}
\end{proof}

\section{Groups with solvable finite presentation problem}\label{s: Examples}

In this section we collect classes of groups in which the finite presentation problem is known to be solvable.   We start with classes of coherent groups.  In \cite{stallings_topology_1983}, Stallings explains how his folding technique can be used to compute presentations for subgroups of free groups.

\begin{example}[Free groups]
The finite presentation problem is uniformly solvable in the class of finitely generated free groups.
\end{example}

More generally, there is an algorithm to compute a presentation for any quasiconvex subgroup of a hyperbolic group.  Indeed, I.\ Kapovich described an algorithm that, given a finite presentation $\mathcal{P}$ of a hyperbolic group $\Gamma$, finds a quasiconvexity constant $K$ for any quasiconvex subgroup $\Lambda\subseteq\Gamma$ (and that runs forever if the given subgroup is not quasiconvex) \cite{kapovich_detecting_1996}.  Papasoglu described an algorithm that computes a hyperbolicity constant $\delta$ for $\mathcal{P}$ \cite{papasoglu_algorithm_1996}.  From $K$ and $\delta$ it is easy to compute a hyperbolicity constant for $\Lambda$, and hence a presentation.   One therefore has the following.

\begin{example}[Locally quasiconvex hyperbolic groups]
The finite presentation problem is uniformly solvable in the class of locally quasiconvex hyperbolic groups.
\end{example}

In a context where one understands the non-quasiconvex subgroups, it may not be necessary to assume local quasiconvexity.  This is the case with the fundamental groups of closed hyperbolic 3-manifolds.

\begin{example}[Closed hyperbolic 3-manifolds]
The finite presentation problem is uniformly solvable in the class of fundamental groups of closed hyperbolic 3-manifolds.  Indeed, Canary showed \cite{canary_mardens_2008} that a consequence of Marden's Tameness Conjecture (proved by Agol \cite{agol_tameness_2004} and, independently, Calegari and Gabai \cite{calegari_shrinkwrapping_2006}) is that every finitely generated, geometrically infinite (i.e. non-quasiconvex) subgroup of a closed hyperbolic 3-manifold group $\Gamma$ is commensurable with a fibre in a finite-sheeted cover that fibres over the circle.  Therefore, the finite presentation problem for closed hyperbolic 3-manifolds can be solved by two algorithms running in parallel: one is Kapovich's algorithm
seeking a quasiconvexity constant; the other enumerates finite-index subgroups of $\Gamma$ and seeks to identify the input subgroup as a virtual fibre.
\end{example}

Recall that a group $\Gamma$ has the \emph{local retractions property} if for every finitely generated subgroup $\Lambda$ there is a subgroup $K$ of finite index in $\Gamma$ such that $K$ contains $\Lambda$ and the inclusion map $\Lambda\hookrightarrow K$ has a left inverse (called a \emph{retraction}).  Given a retraction  $K\to \Lambda$ and a presentation for $K$, it is easy to write down a presentation for $\Lambda$. Groups that enjoy the local retractions property include limit groups \cite{wilton_halls_2008}.

\begin{example}[Limit groups]
The finite presentation problem is uniformly solvable in the class of limit groups \cite{groves_enumerating_2009}.
\end{example}

Kapovich, Weidmann and Myasnikov adapted Stallings's folding machinery to work in more general graphs of groups \cite{kapovich_foldings_2005}.  Their techniques apply in particular to many right-angled Artin groups.

\begin{example}[Coherent right-angled Artin groups]
Coherent right-angled Artin groups have a solvable finite presentation problem.
\end{example}

Our results show that this statement cannot be improved to cover all right-angled Artin groups.

McCammond and Wise generalised Stallings's ideas in a different direction to construct many examples of coherent groups \cite{mccammond_coherence_2005}.  Their procedure often yields an algorithm for computing presentations, and has applications to small-cancellation groups, one-relator groups and Coxeter groups.  See also Payne and Rees \cite{payne_computing_2006} and Schupp \cite{schupp_coxeter_2003}.

The examples listed so far are all coherent.  In particular, they have solvable finite presentability problem.  Remarkably, there are numerous examples of incoherent groups with unsolvable finite presentability problem but solvable finite presentation problem. The first example is the direct product of two non-abelian free groups.

\begin{example}[$F\times F$]
Let $F$ be a finitely generated free group.  Applying the fibre product construction to an undecidable sequence such as the Collins--Miller groups, and observing that the fibre product is finitely presented if and only if the quotient group is finite \cite{grunewald_groups_1978}, we see that the finite presentability problem is unsolvable in $F\times F$.  However, it follows from the work of Baumslag and Roseblade \cite{baumslag_subgroups_1984} that the finite presentation problem is solvable in $F\times F$.
\end{example}

A general method for solving the finite presentation problem comes from the work of Bridson, Howie, Miller and Short \cite{bridson_finitely_2008}. They establish an algorithm that, given finite presentations for groups $G_1,\ldots,G_n$ and a finite subset $S\subseteq G_1\times\ldots\times G_n$ will construct a finite presentation for $\Lambda=\langle S\rangle$ if the projection of $\Lambda$  to each pair of factors $G_i\times G_j$ is of finite index.  In certain circumstances this leads to a solution to the finite presentation problem.  For example, if one combines this theorem with the algorithm for embedding a residually free group into a direct product of limit groups \cite{bridson_finitely_2008}, then the characterisation of finitely presented subgroups in such direct products \cite{bridson_subgroups_2010} can be used to prove the following statements.

\begin{example}[Residually free groups \cite{bridson_finitely_2008}]
The finite presentation problem is uniformly solvable in the class of finitely presented residually free groups.  On the other hand, the following are equivalent for a finitely presented residually free group $\Gamma$:
\begin{enumerate}
\item the finite presentability problem is solvable in $\Gamma$;
\item $\Gamma$ does not contain $F\times F$;
\item $\Gamma$ is either a limit group or a direct product of a limit group and a free abelian group;
\item $\Gamma$ is coherent.
\end{enumerate}
\end{example}

Theorem \ref{t:main} shows that, although the finite presentation problem is uniformly solvable in the class of residually free groups, it is not uniformly solvable in the class of residually (2-dimensional) hyperbolic groups.

\section{Questions} \label{s:qus}

The proof of Theorem \ref{t:single group} gave very little information about the constructed group, other than that it has polynomial Dehn function.

\begin{question}
How might one improve the properties of the group in Theorem \ref{t:single group}?  For instance, is there a direct product of two hyperbolic groups with unsolvable finite presentation problem?  Is the finite presentation problem unsolvable in $\SL_n(\Z)$ for some finite $n$?
\end{question} 

As observed in the previous section, most known solutions of the finite presentation problem arise from proofs of coherence.  

\begin{question}
Is there a class of coherent groups in which the uniform finite presentation problem is unsolvable?
\end{question}

Famously, Serre asked if $\SL_3(\Z)$ is coherent (see page 734 of \cite{serre_problem_1974}).

\begin{question}
Are the finite presentation and finite presentability problems solvable in $\SL_3(\Z)$?
\end{question}

The following question was posed in \cite{groves_conjugacy_????}.

\begin{question}
Is the finite presentation problem uniformly solvable in the class of word-hyperbolic groups?
\end{question}

\bibliographystyle{plain}

\end{document}